\documentclass[12pt, reqno]{amsart}

\usepackage{latexsym, amsmath, amssymb, amsthm, mathscinet, mathtools}
\usepackage{cases, verbatim}
\usepackage[active]{srcltx}
\usepackage{hyperref}
\usepackage{xcolor}
\usepackage{amsfonts}
\usepackage{pifont}
\usepackage{caption}
\usepackage{graphicx, subfig}
\usepackage{bm}
\usepackage{cite}
\usepackage{color,eucal,enumerate,mathrsfs}
\usepackage[normalem]{ulem}
\usepackage{amsmath}
\usepackage[pagewise]{lineno}
\usepackage[
  left=1in,right=1in,
  top=1in,bottom=1.2in,
  headsep=30pt,
  footskip=35pt
]{geometry}

\usepackage{fancyhdr}
\def\R{\mathbb R}
\def\N{\mathbb N}

\newtheorem{theorem}{Theorem}[section]

\newtheorem{remark}[theorem]{Remark}
\newtheorem{corollary}[theorem]{Corollary}
\newtheorem{proposition}[theorem]{Proposition}
\newtheorem{definition}[theorem]{Definition}
\newtheorem{example}[theorem]{Example}

\renewcommand{\d}{\mathrm{d}}
\newcommand{\mm}{\mathfrak m}

\newcommand{\mcp}{{\rm MCP}(0,N)}

\title[CKN on MMS]{$L^2$-Caffarelli--Kohn--Nirenberg inequalities on metric measure spaces}
\author{Zhe-Feng Xu$^{1,2}$, Ye Zhang$^1$}
\date{\today}

\begin{document}
\maketitle
\footnotetext[1]{SISSA, 34136, Trieste, Italy, zxu@sissa.it, yezhang@sissa.it} \footnotetext[2]{School of Mathematical Sciences, University of Science and Technology of China,  230026, Hefei, China, xzf1998@mail.ustc.edu.cn}
\begin{abstract}
Motivated by the sharp constants in the $L^2$-Caffarelli--Kohn--Nirenberg (or $L^2$-CKN for short) inequalities on Euclidean spaces, we study, in a unified framework, a sequence of $L^2$-CKN inequalities on metric measure spaces. On a general metric measure space, this sequence implies a reverse volume comparison of G\"unther type. Moreover, on a subclass of spaces admitting the measure contraction property, we show that this sequence of $L^2$-CKN inequalities are valid if and only if the spaces are volume cones. We also provide a stability result for inequalities of this type on volume cones.
\bigskip

\noindent
\emph{Keywords:} Bernstein's theorem, Caffarelli--Kohn--Nirenberg inequalities, Laplace transform, Needle decomposition

\noindent
\emph{MSC2020:} {\bf 44A10}, 28A50, 35A23
\end{abstract}

\renewcommand{\theequation}{\thesection.\arabic{equation}}
 \maketitle

\bigskip

\section{Introduction}
\subsection{Background}
This paper is primarily motivated by the sharp $L^{2}$-Caffarelli--Kohn--Nirenberg (or $L^2$-CKN for short) inequalities of the form
\begin{equation*}
\int_{\mathbb{R}^{N}} \frac{|\nabla u|^{2}}{|x|^{2b}} \, \mathrm{d}x
    \int_{\mathbb{R}^{N}} \frac{|u|^{2}}{|x|^{2a}} \, \mathrm{d}x \;
    \geq C^{2}(N,a,b)\left( \int_{\mathbb{R}^{N}} \frac{|u|^{2}}{|x|^{a+b+1}} \, \mathrm{d}x \right)^{2},
    \qquad u \in C_{0}^{\infty}(\mathbb{R}^{N}\setminus\{0\}),
\end{equation*}
which were established by Catrina and Wang \cite{CW2001} in the case $a=b+1$, by Costa \cite{C2008} for a particular range of parameters, and by Catrina and Costa \cite{CC2009} for the full range of parameters. More recently, Cazacu, Flynn and Lam \cite{CFL2021} presented a simple and direct approach that yields the best constant $C(N,a,b)$ for the entire admissible range of parameters and characterises all extremisers. The more general family of Caffarelli--Kohn--Nirenberg (or CKN for short) inequalities was first introduced and studied in \cite{CKN1982,CKN1984}, motivated in part by questions of well-posedness and regularity for certain Navier--Stokes equations. Moreover, various versions and extensions of the CKN inequalities have been investigated in broader settings, including Riemannian manifolds \cite{B2010,DX2004,M2015,N2022,X2007}, Finsler manifolds \cite{HKZ2020,KO2013,WW2020}, Lie groups \cite{F2020,H2015,ORS2019,RSY2017,Y2018}, and general metric measure spaces \cite{HX2024,K2025,KO2013,TAX2021}.

The main objective of the present paper is to investigate the $L^2$-CKN inequalities on more general metric measure spaces for a specific class of parameters. First, we recall the Euclidean version \cite{CC2009} for these parameters: let $k\geq 0$ and $N\geq 1$. Then, for any $u\in C^\infty_0(\R^N)$,
\begin{equation}\label{Euclidean}
\left( \int_{\R^N} |x|^{2k}\,|\nabla u|^{2}\, \d x \right)
\left( \int_{\R^N} |x|^{2k+2}\,|u|^{2}\, \d x  \right)
\geq \left(\frac{N}{2}+k\right)^2
\left( \int_{\R^N} |x|^{2k}\,|u|^{2}\, \d x  \right)^{2}
\end{equation}
holds. These inequalities are sharp, and equality is attained by $u(x)=ce^{-\lambda|x|^2}$ for any $c\in \R$ and $\lambda>0$.

\subsection{Main Results}
In this paper, $(X,\d)$ is a Polish space (i.e.\ a complete and separable metric space), and $\mm$ is a Radon measure on $X$ such that $0<\mm(U)<+\infty$ for any non-empty bounded open set $U\subset X$ (i.e.\ $\mathrm{supp}\,\mm=X$). The triple $(X,\d,\mm)$ is called a metric measure space.

For any fixed parameter $k\in\mathbb{N}$, we say that a metric measure space $(X,\d,\mm)$ admits the $L^2$-CKN inequalities (with parameter $k$) if there exists a constant $C(k)>0$ such that, for all
\[
u\in \mathrm{Lip}_b(X,\d) := \{ v:\ v \text{ is Lipschitz with bounded support}\},
\]
the inequality
\begin{equation}\label{first2}\tag*{$(\mathrm{CKN})_{x_0,k}'$}
\left( \int_X \d_{x_0}^{2k}\,|\text{lip}\, u|^{2}\, \mathrm{d}\mm \right)
\left( \int_X \d_{x_0}^{2k+2}\,|u|^{2}\, \mathrm{d}\mm \right)
\geq C(k)
\left( \int_X \d_{x_0}^{2k}\,|u|^{2}\, \mathrm{d}\mm \right)^{2}
\end{equation}
holds, where $\d_{x_0}(x):=\d(x_0,x)$ denotes the distance to a fixed point $x_0\in X$, and
\[
\text{lip}\, u(x):=\limsup_{y\to x}\frac{|u(y)-u(x)|}{\d(y,x)},\qquad x\in X
\]
stands for the local Lipschitz constant of $u$.

However, \ref{first2} for a single $k$ is not our main focus in this work. Instead, motivated by the sharp constants in \eqref{Euclidean}, we investigate the following counterpart, in which the constant depends on $k$ in a uniform manner. More precisely, we concentrate on the case where there exists a constant $C>0$, independent of $k$, such that for all $u\in \mathrm{Lip}_b(X,\d)$,
\begin{equation}\label{first}\tag*{$(\mathrm{CKN})_{C,x_0,k}$}
\left( \int_X \d_{x_0}^{2k}\,|\text{lip}\, u|^{2}\, \mathrm{d}\mm \right)
\left( \int_X \d_{x_0}^{2k+2}\,|u|^{2}\, \mathrm{d}\mm \right)
\geq (C + k)^2
\left( \int_X \d_{x_0}^{2k}\,|u|^{2}\, \mathrm{d}\mm \right)^{2}.
\end{equation}

\medskip
Our first main theorem can be stated as follows.
\begin{theorem}[See also Theorem \ref{thm Lap}]\label{Theorem1}
Let $(X,\d,\mm)$ be a metric measure space, and fix $x_0 \in X$. Assume that, for any $\lambda > 0$,
\[
\int_X e^{-2 \lambda \d_{x_0}^2} \, \d\mm < {+\infty},
\]
and that there exists a constant $C>0$, independent of $k$, such that for any $k \in \N$ and any $u\in \mathrm{Lip}_b(X,\d)$ the inequality \ref{first} holds.
Then, for any $\ell \in \N$, the function
\[
\rho \longmapsto \frac{\mm^\ell(B_\rho(x_0))}{\rho^{2(C + \ell)}}
\]
is non-decreasing on $(0,{+\infty})$. Here $B_{\rho}(x):=\{y\in X:\d(y,x)<\rho\}$, and the measure $\mm^\ell$ is defined by $\d \mm^\ell = \d_{x_0}^{2\ell} \d \mm$.
\end{theorem}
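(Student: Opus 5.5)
The plan is to test the inequality \ref{first} with Gaussian-type functions centred at $x_0$. This turns the family of inequalities into differential inequalities for the Laplace transform of the radial volume function $\rho\mapsto \mm^\ell(B_\rho(x_0))$. Monotonicity of $\mm^\ell(B_\rho(x_0))/\rho^{2(C+\ell)}$ should then follow from Post--Widder-type inversion.

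\emph{Step 1 (test functions).} Take $u_\lambda=e^{-\lambda \d_{x_0}^2}$. Since $\d_{x_0}$ is $1$-Lipschitz, $\text{lip}\,u_\lambda\le 2\lambda \d_{x_0}e^{-\lambda\d_{x_0}^2}$. The function $u_\lambda$ is not in $\mathrm{Lip}_b(X,\d)$, so I would multiply it by cutoffs $\chi_R(\d_{x_0})$ and let $R\to\infty$. The error terms are controlled by the assumption $\int_X e^{-2\lambda\d_{x_0}^2}\,\d\mm<\infty$ for every $\lambda$, because $\d_{x_0}^{2k}e^{-2\lambda\d_{x_0}^2}\le c_{k,\lambda}e^{-\lambda\d_{x_0}^2}$. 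Set
\[
F_k(t):=\int_X \d_{x_0}^{2k}e^{-t\d_{x_0}^2}\,\d\mm ,\qquad t=2\lambda .
\]
Then \ref{first} gives $t^2F_{k+1}(t)^2\ge (C+k)^2F_k(t)^2$, that is,
\[
tF_{k+1}(t)\ge (C+k)F_k(t)\qquad\text{for all } t>0,\ k\in\N .
\]
Note also that $F_{k+1}=-F_k'$.

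\emph{Step 2 (Laplace transform form).} Fix $\ell$. Put $V(s):=\mm^\ell(B_{\sqrt s}(x_0))$ and $a:=C+\ell$. By the layer-cake formula, $F_{\ell+j}(t)=\int_0^\infty s^je^{-ts}\,\d V(s)$, and in particular $F_\ell(t)=t\,\mathcal L[V](t)$. The target is that $s\mapsto V(s)/s^{a}$ is non-decreasing; this is equivalent to the claim, since $\rho^{2a}=s^a$. By the Post--Widder formula,
\[
V(s)=\lim_{n\to\infty}\frac{(-1)^n}{n!}\Big(\frac ns\Big)^{n+1}\mathcal L[V]^{(n)}(n/s)
\]
at continuity points of $V$, and right-continuity handles the remaining points. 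It therefore suffices to show that each approximant divided by $s^a$ is non-decreasing in $s$. Writing $t=n/s$, this means that $t^{n+1+a}(-1)^n\mathcal L[V]^{(n)}(t)$ is non-increasing in $t$, i.e.
\[
t\int_0^\infty s^{n+1}e^{-ts}V(s)\,\d s\ \ge\ (n+1+a)\int_0^\infty s^ne^{-ts}V(s)\,\d s. \tag{$\ast$}
\]

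\emph{Step 3 (main obstacle): derive $(\ast)$ from the chain of Step 1.} I would write $\int_0^\infty s^ne^{-ts}V(s)\,\d s=\int_0^\infty\big(\int_\sigma^\infty s^ne^{-ts}\,\d s\big)\,\d V(\sigma)$. The inner incomplete gamma integral equals $n!\,t^{-n-1}\sum_{j=0}^n \frac{(t\sigma)^j}{j!}e^{-t\sigma}$. This expresses both sides of $(\ast)$ as positive combinations of $t^{-m}F_{\ell+j}(t)$. Then I would apply $tF_{\ell+j+1}\ge(C+\ell+j)F_{\ell+j}$ termwise, or by induction on $n$, to compare the two sums coefficient by coefficient.

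I expect this bookkeeping to be the delicate point. The constants $C+\ell+j$ increase with $j$ exactly by one, and that is what should match the shift $n+1+a$ in $(\ast)$. If the termwise comparison fails, my fallback is to use Bernstein's theorem: show that the chain of inequalities makes a suitable auxiliary function completely monotone, hence the Laplace transform of a positive measure, which is $\d(V/s^a)$ up to normalization.
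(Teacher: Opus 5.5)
Your argument is correct, and its core is the same as the paper's. After the rescaling $t=2\lambda$ (and with $\ell=0$), your inequality $(\ast)$ at level $n$ is exactly $(-1)^nR^{(n)}(\lambda)\ge 0$ for the paper's auxiliary function $R=-\lambda Q'-(C+1)Q$, $Q=\mathcal L[f_{x_0}]$. The paper reaches general $\ell$ by replacing $\mm$ with $\mm^\ell$; you shift the index of the chain instead, which amounts to the same thing.

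Your Step 3 also closes without the fallback. After the incomplete-gamma identity, $(\ast)$ reduces to $\frac{t^{n+1}}{n!}F_{\ell+n+1}\ge a\sum_{j=0}^{n}\frac{t^j}{j!}F_{\ell+j}$. To prove it, write $tF_{\ell+n+1}\ge (a+n)F_{\ell+n}=aF_{\ell+n}+nF_{\ell+n}$. The first piece is the $j=n$ term, and the induction hypothesis handles the second. This is precisely the paper's induction for \eqref{desired}.

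Where you genuinely differ is the final step. The paper invokes Bernstein's theorem to write $R$ as the Laplace transform of a non-negative measure $\mu$. It then identifies $\mu$ as $\tau\,\d f_{x_0}-Cf_{x_0}\,\d\tau$ by uniqueness, and integrates $\tau^{-C-1}\,\d\mu$ by parts in the Stieltjes sense to get monotonicity. You instead apply the Post--Widder inversion formula to $\mathcal L[V]$ and note that $(\ast)$ makes every approximant $V_n(s)/s^a$ non-decreasing, so the pointwise limit is too.

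Your route is more direct: it inlines only the part of Bernstein's theorem that is needed and avoids identifying the representing measure and the Stieltjes calculus. The paper's route gives the structural form "$\d(f_{x_0}/\tau^{C})$ is a non-negative measure", which is equivalent to monotonicity here.

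One small correction: with open balls, $V(s)=\mm^\ell(B_{\sqrt s}(x_0))$ is left-continuous, not right-continuous. Either one-sided continuity suffices to pass from the co-countable, hence dense, set of continuity points to all $s>0$.
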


The motivation is as follows. The extremisers of \eqref{Euclidean} are precisely Gaussian functions. Roughly speaking, if we test \ref{first} with functions in the Gaussian class
\begin{equation}\label{Gauss}
    E_{G,x_0} := \{c e^{-\lambda \d_{x_0}^2} : \, c \in \R, \lambda > 0\},
\end{equation}
then, by the layer cake representation (or Fubini's theorem) together with a change of variables, we find that the Laplace transform of a certain function related to the volume of metric balls is non-negative. However, this does not directly imply that the function itself is non-negative. To obtain such a conclusion, we need to use Bernstein's theorem, whose assumptions are exactly \ref{first} for every $k\in \N$ with a $k$-independent constant $C>0$. For more details, we refer the reader to the proof of Theorem \ref{Theorem1}.

\begin{remark}\label{con}
It turns out that, in order to obtain the conclusion of Theorem \ref{Theorem1}, we genuinely need \ref{first} to hold for all $k\in \N$. In fact, in Subsection \ref{3.5} we shall present a counterexample for which \ref{first} holds for all $k\ge 1$ with a $k$-independent constant $C>0$ but fails for $k=0$; in this case, the function $\rho \longmapsto \mm(B_\rho(x_0))/\rho^{2C}$
is not non-decreasing.
\end{remark}

\begin{corollary}\label{prop1}
Let $\beta,\gamma>0$, $D\geq 1$, and $(X,\d,\mm)$ be a metric measure space, and fix $x_0 \in X$. Assume that there exists a constant $C>0$, independent of $k$, such that for any $k \in \N$ and any $u\in \mathrm{Lip}_b(X,\d)$ the inequality \ref{first} holds. Suppose also that
\begin{align}
\label{VD}\tag*{$(\mathrm{VD})_{D,\gamma}$}
\frac{\mm(B_{R}(x))}{R^\gamma} &\le D\,\frac{\mm(B_{r}(x))}{r^\gamma}, \qquad \forall \, 0<r<R,\, x \in X, \\
\label{AR}\tag*{$(\mathrm{AR})_{x_0, \beta, c_*}$}
\liminf_{r \to 0^+} &\frac{\mm(B_{r}(x_0))}{r^\beta} = c_* \in (0,+\infty).
\end{align}
Then $2C \le \beta \le \gamma$, and there exists a constant $C_*>0$ such that
\begin{align}\label{loweruni}
\frac{\mm(B_{r}(x))}{r^\beta}
\ge C_* \frac{1}{(1 + r + \d(x_0,x))^{\beta - 2C}}
\cdot\frac{1}{\left( 1 + \frac{\d(x_0,x)}{r}\right)^{\gamma - \beta}},
\qquad \forall \, r > 0,\, x \in X.
\end{align}
\end{corollary}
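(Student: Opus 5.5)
The plan is to reduce everything to Theorem \ref{Theorem1} with $\ell=0$, which says that $\rho\mapsto \mm(B_\rho(x_0))/\rho^{2C}$ is non-decreasing on $(0,+\infty)$. Before invoking it we must check its integrability hypothesis $\int_X e^{-2\lambda \d_{x_0}^2}\,\d\mm<+\infty$ for every $\lambda>0$. This follows from \ref{VD} at $x=x_0$: taking $r=1$ gives $\mm(B_R(x_0))\le D\,\mm(B_1(x_0))R^\gamma$ for $R\ge1$, i.e.\ polynomial volume growth. By the layer cake formula,
\[
\int_X e^{-2\lambda \d_{x_0}^2}\,\d\mm=\int_0^\infty 4\lambda\rho\, e^{-2\lambda\rho^2}\,\mm(B_\rho(x_0))\,\d\rho<+\infty .
\]
Hence the monotonicity from Theorem \ref{Theorem1} is available.

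The exponent inequalities come next. Monotonicity gives $\mm(B_r(x_0))/r^{2C}\le \mm(B_1(x_0))$ for $r\le1$. Then
\[
\frac{\mm(B_r(x_0))}{r^\beta}\le \mm(B_1(x_0))\,r^{2C-\beta},
\]
and if $\beta<2C$ the right side tends to $0$ as $r\to0^+$, contradicting $c_*>0$ in \ref{AR}; thus $2C\le\beta$. For $\beta\le\gamma$, apply \ref{VD} at $x_0$ with $R=1$ fixed and $r\to0$:
\[
\frac{\mm(B_r(x_0))}{r^\beta}\ge D^{-1}\mm(B_1(x_0))\,r^{\gamma-\beta}.
\]
If $\gamma<\beta$ the right side blows up, contradicting the finiteness of $c_*$. (One could also pass to a sequence realising the liminf; the bound holds for all small $r$, so the liminf is $+\infty$.)

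For \eqref{loweruni}, I first record two bounds at $x_0$. Since the liminf in \ref{AR} is $c_*>0$, there is $r_0\in(0,1]$ with $\mm(B_s(x_0))\ge \tfrac{c_*}{2}s^\beta$ for $s\le r_0$. Monotonicity then propagates this to large $s$: for $s\ge r_0$,
\[
\mm(B_s(x_0))\ge \mm(B_{r_0}(x_0))(s/r_0)^{2C}\ge c\,s^{2C}.
\]
Combining, $\mm(B_s(x_0))\ge c\, s^\beta/(1+s)^{\beta-2C}$ for all $s>0$, using $\beta\ge 2C$. Now fix $x$ and $r$, and set $d=\d(x_0,x)$ and $R=r+d$. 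Then $B_R(x)\supset B_r(x_0)$, and more usefully $B_{R}(x)\supset B_{r}(x_0)$ also holds in the case $d<r$. In all cases $\mm(B_R(x))\ge \mm(B_{s}(x_0))$ with $s=R-d=r$. Rather than that, I would take $s=R$ via $B_{2R}(x)\supset B_R(x_0)$, using $2R$ to keep the constants harmless. Applying \ref{VD} between $r$ and $2R$ gives
\[
\frac{\mm(B_r(x))}{r^\gamma}\ge D^{-1}\frac{\mm(B_R(x_0))}{(2R)^\gamma}\ge c\,\frac{R^{\beta-\gamma}}{(1+R)^{\beta-2C}}.
\]
Multiplying by $r^{\gamma-\beta}$ yields
\[
\frac{\mm(B_r(x))}{r^\beta}\ge c\,(r/R)^{\gamma-\beta}(1+R)^{2C-\beta},
\]
and $r/R=(1+d/r)^{-1}$, $1+R=1+r+d$. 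This is exactly \eqref{loweruni}, with $C_*$ depending on $D,\gamma,\beta,c_*,r_0,\mm(B_{r_0}(x_0))$.

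The step needing the most care is combining the small-scale bound from \ref{AR} with the large-scale monotonicity into a single bound valid for every $s>0$, since the liminf only gives information along small radii. This is handled by choosing $r_0$ as above and checking that the sign conditions $\beta\ge 2C$ and $\gamma\ge\beta$ make every exponent manipulation go the right way.
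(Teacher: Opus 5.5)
Your proof is correct and follows essentially the same route as the paper. You check integrability via \ref{VD} at $x_0$ and invoke Theorem \ref{Theorem1} for the monotonicity of $\mm(B_\rho(x_0))/\rho^{2C}$. You get $2C\le\beta\le\gamma$ by comparing the two power bounds at small radii with \ref{AR}, combine the small-scale bound from \ref{AR} with monotonicity into $\mm(B_s(x_0))\ge c\,s^\beta(1+s)^{2C-\beta}$, and transfer this to $B_r(x)$ via \ref{VD} and a ball inclusion.

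The differences are cosmetic. You compare at radius $2(r+d)$ using $B_{r+d}(x_0)\subset B_{2(r+d)}(x)$, where the paper uses $\rho=2d+r$ with $B_{\rho-d}(x_0)\subset B_\rho(x)$; your choice lets you handle $x=x_0$ without a separate case. The aside about $B_R(x)\supset B_r(x_0)$ before you switch to $2R$ is superfluous but harmless.
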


Next, we extend the rigidity result in \cite{HX2024} to $L^2$-CKN.
\begin{theorem}\label{Theorem2}
Let $N\in(1,{+\infty})$. Let $(X,\d,\mm)$ be an essentially non-branching metric measure space satisfying ${\rm MCP}(0,N)$. Then the following statements are equivalent:
\begin{itemize}
\item[{\bf (a)}] There exist $x_0\in X$ and $k\in\mathbb{N}$ such that \ref{first} holds with the sharp constant $C=\frac{N}{2}$;
\item[{\bf (b)}] There exists $x_0\in X$ such that, for every $k\in\mathbb{N}$, \ref{first} holds with the sharp constant $C=\frac{N}{2}$;
\item[{\bf (c)}] $(X,\d,\mm)$ is an $N$-volume cone.
\end{itemize}
\end{theorem}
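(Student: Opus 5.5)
The plan is to prove the chain (c)$\Rightarrow$(b)$\Rightarrow$(a)$\Rightarrow$(c). The middle implication is trivial, since (b) asserts \ref{first} with $C=N/2$ for every $k$, in particular for one. So the work lies in the two outer implications.

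\textbf{(c)$\Rightarrow$(b).} Let $x_0$ be a tip of the $N$-volume cone. Then $\mm(B_\rho(x_0))=c\rho^N$, and the needle decomposition associated with $\d_{x_0}$ disintegrates $\mm$ along rays $\gamma_\alpha$ from $x_0$. On each ray the conditional measure has density $h_\alpha(t)=c_\alpha t^{N-1}$; this follows from MCP$(0,N)$, which makes $h_\alpha(t)/t^{N-1}$ non-increasing, combined with the exact volume growth, which forces equality almost everywhere. Along each needle we have $|\mathrm{lip}\,u|\ge |(u\circ\gamma_\alpha)'|$. It therefore suffices to prove the one-dimensional weighted inequality
\[
\int_0^\infty t^{2k+N-1}|f'|^2\,\d t\int_0^\infty t^{2k+N+1}f^2\,\d t\ge\Big(\tfrac N2+k\Big)^2\Big(\int_0^\infty t^{2k+N-1}f^2\,\d t\Big)^2 .
\]
This follows by integrating by parts in $\int t^{2k+N-1}f^2=-\frac{2}{2k+N}\int t^{2k+N}ff'$ and applying Cauchy--Schwarz. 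The per-needle inequalities are then summed over $\alpha$. To do so, I apply Cauchy--Schwarz once more in the form $\int_Q A_\alpha B_\alpha\,\d\mathfrak q\le(\int A_\alpha^2)^{1/2}(\int B_\alpha^2)^{1/2}$ to the products $\int t^{2k+N}|f_\alpha f_\alpha'|\,\d t$ before squaring, which yields the global inequality.

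\textbf{(a)$\Rightarrow$(c).} Fix the given $x_0$ and $k$. MCP$(0,N)$ together with essential non-branching gives a needle decomposition from $x_0$ with densities $h_\alpha$ such that $h_\alpha(t)/t^{N-1}$ is non-increasing, so $\rho\mapsto\mm^k(B_\rho(x_0))/\rho^{N+2k}$ is non-increasing (Bishop--Gromov). Next I test \ref{first} on the Gaussians $u=e^{-\lambda\d_{x_0}^2}$ from \eqref{Gauss}, which are admissible after a cutoff; the required integrability holds by the polynomial growth coming from MCP. Using $|\mathrm{lip}\,u|\le 2\lambda\d_{x_0}u$ and the layer-cake formula, each integral becomes a Laplace-type transform of $V(\rho):=\mm^k(B_\rho(x_0))$. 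Writing $\int \d_{x_0}^{2k+2}e^{-2\lambda\d_{x_0}^2}\d\mm=I_2(\lambda)$ and $\int\d_{x_0}^{2k}e^{-2\lambda\d_{x_0}^2}\d\mm=I_0(\lambda)$, the inequality reads $4\lambda^2I_2^2\ge(N/2+k)^2I_0^2$, i.e. $2\lambda I_2\ge(N/2+k)I_0$. Integrating by parts gives $I_0(\lambda)=4\lambda\int_0^\infty \rho e^{-2\lambda\rho^2}V(\rho)\,\d\rho$, and $I_2=-\frac12 I_0'$. Thus the inequality is $-\lambda I_0'(\lambda)\ge (N/2+k)I_0(\lambda)$, meaning $\lambda\mapsto\lambda^{N/2+k}I_0(\lambda)$ is non-increasing. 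On the other hand, Bishop--Gromov writes $V(\rho)=\rho^{N+2k}\phi(\rho)$ with $\phi$ non-increasing. Substituting $\rho=s/\sqrt{\lambda}$ gives
\[
\lambda^{N/2+k}I_0(\lambda)=4\int_0^\infty s^{N+2k+1}e^{-2s^2}\phi\big(s/\sqrt\lambda\big)\,\d s,
\]
which is non-decreasing in $\lambda$. Hence it is constant, so $\phi(s/\sqrt\lambda)$ is constant in $\lambda$ for a.e.\ $s$ by monotonicity, and $\phi$ is constant. Consequently $\mm^k(B_\rho)=c\rho^{N+2k}$.

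The main obstacle is converting this back into the equality $h_\alpha=c_\alpha t^{N-1}$ on almost every needle, and then into $\mm(B_\rho)=c\rho^N$ and the cone structure. Since $h_\alpha(t)/t^{N-1}$ is monotone on each needle, equality in the averaged monotonicity forces equality almost everywhere on each needle; one also needs the needles to have infinite length, which follows from the volume growth never saturating. This yields the $N$-volume cone, in the same way as the rigidity argument of \cite{HX2024}, which I would invoke for the final identification.
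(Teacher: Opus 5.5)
Your proposal is correct. The direction (c)$\Rightarrow$(b) is essentially the paper's argument: rays with densities $c_\alpha t^{N-1}$, the one-dimensional integration by parts plus Cauchy--Schwarz, then a global Cauchy--Schwarz. One small omission there: statement (b) also asserts \emph{sharpness}, i.e.\ that $(\frac N2+k)^2$ cannot be improved, and you never check this. The paper does it by inserting the truncated Gaussians $e^{-\lambda\d_{x_0}^2}$, which give equality on the cone. Alternatively, your own scaling argument gives it: a constant $C>\frac N2$ would make $\lambda^{C+k}I_0(\lambda)$ non-increasing, while $\lambda^{N/2+k}I_0(\lambda)$ is non-decreasing, which is impossible.

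For (a)$\Rightarrow$(c) your route is genuinely different from the paper's.

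\textbf{The paper's route.} It uses only Theorem \ref{theorem4} for $\mm$ itself and normalises $\lim_{\rho\to0}\mm(B_\rho)/(\omega_N\rho^N)=A$ (``without loss of generality''). Via $\mm^k(B_\rho)=2k\int_0^\rho[\mm(B_\rho)-\mm(B_t)]t^{2k-1}\,\d t$ it extracts only the upper bound $\mm^k(B_\rho)\le\frac{AN}{N+2k}\omega_N\rho^{N+2k}$ and the matching small-radius limit. It then cites the Gaussian-testing step of \cite{HX2024} to get equality.

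\textbf{Your route.} You derive the full monotonicity of $\mm^k(B_\rho)/\rho^{N+2k}$ from the per-needle monotonicity of $h_\alpha(t)/t^{N-1}$. This does not follow from Theorem \ref{theorem4} for $\mm$ alone, so the needles are genuinely used. Your identity $\lambda^{N/2+k}I_0(\lambda)=4\int_0^\infty s^{N+2k+1}e^{-2s^2}\phi(s/\sqrt\lambda)\,\d s$ then makes the rigidity step self-contained. It also automatically rules out $A=+\infty$, which the paper simply assumes away.

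\textbf{The back-conversion.} The step you call the main obstacle is a one-line computation in the paper. Since $\mm(\{x_0\})=0$, one has $\d\mm=\d_{x_0}^{-2k}\,\d\mm^k$, and
\[
\mm(B_\rho(x_0))=2k\int_0^{+\infty}\mm^k\bigl(B_{\min\{t,\rho\}}(x_0)\bigr)\,t^{-2k-1}\,\d t
\]
turns $\mm^k(B_\rho)=c\rho^{N+2k}$ directly into $\mm(B_\rho)=\frac{N+2k}{N}c\rho^N$. No per-needle rigidity, infinite length of rays, or further ``cone structure'' is needed, since an $N$-volume cone is defined purely by the volume ratio. Your needle-based conversion is nevertheless valid. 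Constancy of the $\mathfrak q$-average of the non-increasing functions $\rho\mapsto\mm^k_\alpha(B_\rho)/\rho^{N+2k}$ forces each to be constant for $\mathfrak q$-a.e.\ $\alpha$. That in turn forces full-length needles and $h_\alpha\propto t^{N-1}$. It is simply more machinery than necessary.
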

In Theorem~\ref{Theorem2}, sharpness is understood in the sense that the sharp constant in \ref{first2} on $(X,\d,\mm)$ is $\left(\frac{N}{2}+k\right)^2$, as in the Euclidean space $\mathbb{R}^N$.

\medskip

Finally, motivated by the recent paper \cite{LLR2026}, we investigate the stability of \ref{first} on general metric measure spaces.

\begin{proposition}\label{stability}
Let $N\in(1,{+\infty})$ and $k \in \N$. Let $(X,\d,\mm)$ be an essentially non-branching metric measure space satisfying ${\rm MCP}(0,N)$, and assume that it is also an $N$-volume cone with vertex $x_0 \in X$. Then
\[
\delta_{k}(u) \ge \mathbb{D}_k^2(u,E_{G,x_0}), \qquad \forall \, u \in S_{x_0}^{r}.
\]
Here,
\[
\delta_{k}(u) := \left( \int_X \d_{x_0}^{2k}\,|\mathrm{lip}\, u|^{2}\, \mathrm{d}\mm \right)^{\frac{1}{2}}
\left( \int_X \d_{x_0}^{2k+2}\,|u|^{2}\, \mathrm{d}\mm \right)^{\frac{1}{2}}
- \left(\frac{N}{2}+k\right)
 \int_X \d_{x_0}^{2k}\,|u|^{2}\, \mathrm{d}\mm
\]
is the deficit, and
\[
\mathbb{D}_k^2(u,E_{G,x_0}) := \inf_{v \in E_{G,x_0}} \left\{ \int_X |u - v|^2 \d_{x_0}^{2k} \, \d \mm \right\},\quad
S_{x_0}^{r} := \{ u_0(\d_{x_0}) : u_0 \in C_0^\infty((0,+\infty)) \}.
\]
Moreover, $E_{G,x_0}$ is defined by \eqref{Gauss}.
\end{proposition}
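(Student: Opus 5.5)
The plan is to reduce everything to a one-dimensional weighted problem in the radial variable. There I will write the deficit as an explicit square, bound that square from below by a Poincaré inequality for a log-concave weight, and use dilation invariance of the radial problem to normalise the constant to one.

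\emph{Step 1 (reduction to one dimension).} Let $u=f(\d_{x_0})$ with $f\in C_0^\infty((0,+\infty))$. Since $(X,\d,\mm)$ is an $N$-volume cone with vertex $x_0$, we have $\mm(B_r(x_0))=c_0r^N$. By the layer cake formula, $\int_X g(\d_{x_0})\,\d\mm=Nc_0\int_0^\infty g(r)r^{N-1}\,\d r$ for every radial integrand. For the gradient I only need the inequality $\mathrm{lip}\,u(x)\ge|f'(\d_{x_0}(x))|$, because $A$ enters $\delta_k$ with a positive sign. Under ${\rm MCP}(0,N)$ the space is geodesic. Along a geodesic from $x_0$ through $x$, the function $\d_{x_0}$ changes at unit speed, which gives this inequality; it holds at least for $\mm$-a.e.\ $x$, which is enough. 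Writing $\d\mu=r^{N-1}\d r$ and dropping the common factor $Nc_0$ (both sides of the claim are homogeneous of degree one in $\mm$), define
\[
A=\int r^{2k}|f'|^2\d\mu,\qquad B=\int r^{2k+2}f^2\d\mu,\qquad I=\int r^{2k}f^2\d\mu .
\]
Then $\delta_k(u)\ge\sqrt{AB}-(\tfrac N2+k)I$. Moreover, $\mathbb D_k^2(u,E_{G,x_0})$ is the infimum over $c\in\R$ and $\lambda'>0$ of $\int r^{2k}|f-ce^{-\lambda' r^2}|^2\d\mu$.

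\emph{Step 2 (completing the square).} For $\lambda>0$ I expand $\int r^{2k}|f'+2\lambda rf|^2\d\mu$. The cross term is $2\lambda\int r^{2k+N}(f^2)'\,\d r$, and integrating by parts (no boundary terms, by compact support in $(0,\infty)$) gives
\[
\int r^{2k}|f'+2\lambda rf|^2\d\mu=A+4\lambda^2B-2\lambda(N+2k)I .
\]
Choosing $\lambda=\tfrac12\sqrt{A/B}$ makes the right side equal to $2\bigl(\sqrt{AB}-(\tfrac N2+k)I\bigr)$. Now write $f=ge^{-\lambda r^2}$, so that $f'+2\lambda rf=g'e^{-\lambda r^2}$. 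Let $\d\nu_\lambda=r^{2k+N-1}e^{-2\lambda r^2}\d r$. Then
\[
\delta_k(u)\ge\tfrac12\int|g'|^2\d\nu_\lambda .
\]
On the other side, taking $v=ce^{-\lambda r^2}$ with the same $\lambda$ gives $\mathbb D_k^2\le\inf_c\int|g-c|^2\d\nu_\lambda=\mathrm{Var}_{\nu_\lambda}(g)\,\nu_\lambda((0,\infty))$, that is, the unnormalised variance.

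\emph{Step 3 (Poincaré).} The measure $\nu_\lambda$ has potential $V(r)=2\lambda r^2-(2k+N-1)\log r$ on $(0,\infty)$. Its second derivative is $V''=4\lambda+(2k+N-1)/r^2\ge4\lambda$. By the Brascamp--Lieb / Bakry--\'Emery inequality in one dimension, the normalised variance satisfies $\mathrm{Var}_{\nu_\lambda}(g)\le\frac1{4\lambda}\int|g'|^2\d\nu_\lambda$, once $\nu_\lambda$ is normalised to a probability measure; the unnormalised forms carry the same factor on both sides. Hence
\[
\delta_k(u)\ge 2\lambda\,\mathbb D_k^2(u,E_{G,x_0}).
\]

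\emph{Step 4 (normalising $\lambda$ by dilation; the main point).} The constant $2\lambda$ depends on $u$, so I remove it by scaling. For $s>0$, set $f_s(r)=f(sr)$ and $u_s=f_s(\d_{x_0})\in S^r_{x_0}$. A change of variables shows $A\mapsto s^{2-2k-N}A$, $B\mapsto s^{-2-2k-N}B$ and $I\mapsto s^{-2k-N}I$. So $\sqrt{AB}-(\tfrac N2+k)I$ is multiplied by $s^{-2k-N}$, and $\lambda=\tfrac12\sqrt{A/B}$ becomes $s^2\lambda$. Because $E_{G,x_0}$ is invariant under $r\mapsto sr$, the distance $\mathbb D_k^2$ in its one-dimensional form is also multiplied by $s^{-2k-N}$. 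The ratio of the one-dimensional deficit to $\mathbb D_k^2$ is therefore dilation invariant. I choose $s$ with $s^2\lambda=\tfrac12$ and apply Step 3 to $f_s$, which gives ratio at least $1$. Hence $\delta_k(u)\ge\sqrt{AB}-(\tfrac N2+k)I\ge\mathbb D_k^2(u,E_{G,x_0})$.

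The delicate points are two. The first is the lower bound $\mathrm{lip}\,u\ge|f'(\d_{x_0})|$ in Step 1. The second is carrying out the scaling at the level of the one-dimensional integrals, since a metric dilation of $X$ is not available; this is legitimate because every quantity involved depends only on $f$ and $\mu$.
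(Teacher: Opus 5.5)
Your argument reaches the right conclusion by a different route from the paper's, but Step~2 contains an algebra slip that you should fix.

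\emph{Comparison with the paper.} The paper disintegrates $\mm$ along the needles of $\d_{x_0}$ and identifies each needle with $([0,+\infty),|\cdot|,c_\alpha x^{N-1}\,\d x)$. It then applies the one-dimensional stability theorem \cite[Theorem 1.8]{LLR2026} needle by needle, with $\xi,\eta$ independent of $\alpha$. Finally it integrates in $\alpha$, recombines the square roots by Cauchy--Schwarz, and uses $|u_0'|(\d_{x_0})\le\mathrm{lip}\,u$.

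You replace the needle decomposition by the observation $(\d_{x_0})_\sharp\mm=Nc_0r^{N-1}\,\d r$, which follows from $\mm(B_r(x_0))=c_0r^N$ alone. You then reprove the cited one-dimensional inequality yourself. You complete the square with $e^{-\lambda r^2}$, $\lambda=\frac12\sqrt{A/B}$, which is exactly the paper's Gaussian $e^{-x^2/(2\xi^2)}$. You then apply the Bakry--\'Emery/Brascamp--Lieb Poincar\'e inequality for $r^{2k+N-1}e^{-2\lambda r^2}\,\d r$.

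Your route is self-contained and more general. For radial $u$ it uses only the volume-cone identity and the existence of geodesics to $x_0$; the latter gives $\mathrm{lip}\,u\ge|f'(\d_{x_0})|$, which the paper also uses without comment. So ${\rm MCP}(0,N)$, essential non-branching and the needle machinery are not really needed for this statement. The paper's route is shorter given the citation, and it mirrors the proof of Theorem~\ref{Theorem2}, where needles matter for non-radial functions.

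\emph{The slip in Step~2.} With $\lambda=\frac12\sqrt{A/B}$ one has $4\lambda^2B=A$ and $2\lambda=\sqrt{A/B}$. Hence
\[
A+4\lambda^2B-2\lambda(N+2k)I=\sqrt{A/B}\,\bigl(2\sqrt{AB}-(N+2k)I\bigr)=4\lambda\Bigl(\sqrt{AB}-\bigl(\tfrac N2+k\bigr)I\Bigr),
\]
and not $2\bigl(\sqrt{AB}-(\frac N2+k)I\bigr)$. Consequently your displays $\delta_k(u)\ge\frac12\int|g'|^2\,\d\nu_\lambda$ and $\delta_k(u)\ge2\lambda\,\mathbb D_k^2(u,E_{G,x_0})$ are wrong for general $\lambda$. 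This already fails on $\R^N$, where $\mathrm{lip}\,u=|f'(|x|)|$. There the second display, combined with the dilation invariance you prove in Step~4, would force $\mathbb D_k^2\equiv0$.

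Your proof survives only because Step~4 invokes Steps~2--3 exactly at $\lambda=\frac12$, where $4\lambda=2$. Once the factor is corrected, Step~4 is unnecessary, because the factor $4\lambda$ cancels the Poincar\'e constant $\frac1{4\lambda}$:
\[
\inf_{c\in\R}\int|g-c|^2\,\d\nu_\lambda\le\frac1{4\lambda}\int|g'|^2\,\d\nu_\lambda=\sqrt{AB}-\Bigl(\frac N2+k\Bigr)I
\]
for every $f$. After restoring the factor $Nc_0$, this is the claim, with no rescaling needed.
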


For further stability results concerning other versions of the CKN inequalities, we refer the reader to \cite{CFLL2024,FP2024,WW2022,ZZ2026}.

\subsection{Some Generalisations}
In fact, our argument also applies to the $L^p$ analogues.
\begin{theorem}\label{Theorem1p}
Let $(X,\d,\mm)$ be a metric measure space and $p \in (1,+\infty)$. Fix $x_0 \in X$, and denote the conjugate exponent of $p$ by $p' := \frac{p}{p - 1}$.
Assume that, for any $\lambda > 0$,
\[
\int_X e^{-p \lambda \d_{x_0}^{p'}} \, \d\mm < {+\infty},
\]
and that there exists a constant $C>0$, independent of $k$, such that for any $k \in \N$ and any $u\in \mathrm{Lip}_b(X,\d)$ the following inequality holds:
\begin{equation}\label{firstp}
\left( \int_X \d_{x_0}^{p'k}\,|\mathrm{lip}\, u|^{p}\, \mathrm{d}\mm \right)
\left( \int_X \d_{x_0}^{p'(k + 1)}\,|u|^{p}\, \mathrm{d}\mm \right)^{p - 1}
\geq \left( \frac{ C + k}{p - 1}\right)^p
\left( \int_X \d_{x_0}^{p'k}\,|u|^{p}\, \mathrm{d}\mm \right)^{p}.
\end{equation}
Then, for any $\ell \in \N$, the function
\[
\rho \longmapsto \frac{\mm^{\ell,p}(B_\rho(x_0))}{\rho^{p'(C +\ell)}}
\]
is non-decreasing on $(0,{+\infty})$. Here the measure $\mm^{\ell,p}$ is defined by $\d \mm^{\ell,p} = \d_{x_0}^{p'\ell} \d \mm$.
\end{theorem}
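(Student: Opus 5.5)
We follow the same strategy as for Theorem~\ref{Theorem1}: test the inequality \eqref{firstp} on the $p$-Gaussian class, turn the resulting family of inequalities into complete monotonicity of a single function of $\lambda$, and conclude with Bernstein's theorem and uniqueness of the Laplace transform. Write $r=\d_{x_0}$ and let $\nu$ be the push-forward of $\mm$ under $t=r^{p'}$. Set $\nu_k:=t^k\nu$, which is the push-forward of $\mm^{k,p}$, and $V_k(t):=\nu_k([0,t])$, so that $V_k(\rho^{p'})=\mm^{k,p}(\bar B_\rho(x_0))$. For $s>0$ let
\[
F_k(s):=\int_X r^{p'k}e^{-s r^{p'}}\,\d\mm=\int_{[0,\infty)} e^{-st}\,\d\nu_k(t).
\]
This is finite for every $k\in\N$, because of the integrability hypothesis and $t^ke^{-\epsilon t}\le C_\epsilon$. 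Moreover $F_{k+1}=-F_k'$.

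\emph{Step 1 (testing).} We plug $u=e^{-\lambda r^{p'}}$ into \eqref{firstp}. Since $u\notin\mathrm{Lip}_b$, we first use $u\,\eta_R(r)$ with cut-offs $\eta_R$ and pass to the limit $R\to\infty$ by dominated convergence, which is justified by the finiteness of all $F_k$. We have $\mathrm{lip}\,u\le \lambda p' r^{p'-1}e^{-\lambda r^{p'}}$ and $(p'-1)p=p'$. Hence the left side of \eqref{firstp} is at most $(\lambda p')^pF_{k+1}(p\lambda)^p$. Taking $p$-th roots and using $p'(p-1)=p$, we get with $s=p\lambda$:
\[
G_k(s):=-sF_k'(s)-(C+k)F_k(s)\ \ge 0\qquad\text{for all }s>0,\ k\in\N .
\]

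\emph{Step 2 (complete monotonicity).} Differentiating and using $F_{k+1}=-F_k'$ gives $G_k'=-sF_k''-(C+k+1)F_k'=-G_{k+1}$. Hence $(-1)^jG_k^{(j)}=G_{k+j}\ge0$ for all $j$, so each $G_k$ is completely monotone. This is exactly the step where the validity of \eqref{firstp} for \emph{every} $k$ with a $k$-independent $C$ is needed, consistent with Remark~\ref{con}. Since $1/s$ is completely monotone and products of completely monotone functions are completely monotone, $G_k(s)/s$ is completely monotone. By Bernstein's theorem there is a nonnegative measure $\eta_k$ on $[0,\infty)$ with $G_k(s)/s=\int e^{-st}\,\d\eta_k$.

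\emph{Step 3 (identification).} By the layer cake formula, $F_k(s)=s\int_0^\infty e^{-st}V_k(t)\,\d t$. Also $-F_k'(s)=\int e^{-st}\,t\,\d\nu_k$. Therefore
\[
\frac{G_k(s)}{s}=\int e^{-st}\bigl(t\,\d\nu_k-(C+k)V_k(t)\,\d t\bigr).
\]
Uniqueness of the Laplace transform for (signed, locally finite, exponentially integrable) measures yields $t\,\d\nu_k-(C+k)V_k\,\d t=\eta_k\ge0$. Consequently, in the sense of distributions on $(0,\infty)$,
\[
\frac{\d}{\d t}\bigl(t^{-(C+k)}V_k\bigr)=t^{-(C+k)-1}\bigl(t\,\d\nu_k-(C+k)V_k\,\d t\bigr)\ge0 .
\]
Thus $t\mapsto t^{-(C+k)}V_k(t)$ is non-decreasing. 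Substituting $t=\rho^{p'}$ and passing from closed to open balls by left limits gives the monotonicity of $\rho\mapsto \mm^{k,p}(B_\rho(x_0))/\rho^{p'(C+k)}$ for every $\ell=k\in\N$.

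The main technical obstacle is Step 3. Uniqueness must be applied to a signed measure, which we split as the difference of two nonnegative measures, each of whose Laplace transforms is finite for all $s>0$ by Step 1. One must also check the boundary behaviour of the layer cake identity at $t=0$ (the atom of $\nu$ at $0$ contributes only when $k=0$ and is harmless). The approximation in Step 1 is routine once the finiteness of all $F_k$ is established.
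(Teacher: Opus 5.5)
Your proof is correct and follows the paper's route: you test with the $p$-Gaussians $e^{-\lambda \d_{x_0}^{p'}}$, rewrite everything as Laplace transforms via the layer cake formula, prove complete monotonicity, and conclude with Bernstein's theorem and Laplace uniqueness. Your $G_0(s)/s$ is the paper's $R(\lambda)$ up to the factor $p$ and the substitution $s=p\lambda$. The differences are only streamlining: the recursion $G_k'=-G_{k+1}$, together with complete monotonicity of $1/s$, replaces the paper's explicit derivative formula and induction, and treating all $G_\ell$ at once replaces the paper's reduction to $\ell=0$ via the measure $\mm^{\ell,p}$.
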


\begin{remark}
We choose to write the constant in \eqref{firstp} as $\left( \frac{ C + k}{p - 1}\right)^p$ rather than $\left(  C+ \frac{k}{p - 1}\right)^p$ in order to make the proof more compatible with the proof of Theorem \ref{Theorem1}.
\end{remark}

Furthermore, for $p \in (1, +\infty)$ we can also obtain the analogues of Corollary \ref{prop1} (replacing $2$ by $p'$) and Theorem \ref{Theorem2} (with sharp constant $\frac{N}{p'}$), but we omit the statements here.

\subsection{Structure of the paper}
In Section \ref{section2} we recall the Bernstein's theorem and basic properties of the measure contraction property. In Section \ref{3.1} we prove Theorem \ref{Theorem1} by using the Bernstein's theorem. In Section \ref{3.2} we prove Corollary \ref{prop1}. In Section \ref{3.3} we prove Theorem \ref{Theorem2} by using the needle decomposition of measures. In Section \ref{3.4} we establish the stability result in Proposition \ref{stability}. Finally, in Section \ref{3.5} we provide the counterexample mentioned in Remark \ref{con}.

\vspace{0.3cm}

\section{Preliminaries}\label{section2}
\subsection{Bernstein's Theorem}\label{2.1}

One of our main tools in the proof of Theorem \ref{Theorem1} is the classical Bernstein's theorem, which states that any completely monotonic function on $(0,{+\infty})$ is the Laplace transform of a non-negative measure. For a detailed exposition of the Laplace transform and related topics, we refer the reader to the books \cite{SSV2012, W1941}.

\begin{definition}[Complete monotonicity]
A smooth function $f$ is completely monotonic on $(a,b)\subset \R$ ($-\infty \le a < b \le +\infty$) if it satisfies
\begin{align}\label{comm}
    (-1)^k f^{(k)}(x) \ge 0, \qquad \forall \, x \in (a,b), \, k \in \N.
\end{align}
\end{definition}

Next, we state the Bernstein's theorem, which can be found in \cite[Theorem 1.4]{SSV2012}.

\begin{theorem}[Bernstein's theorem]\label{tBern}
A smooth function $f$ is completely monotonic on $(0,{+\infty})$ if and only if it is the Laplace transform of a unique measure $\mu$ (a non-negative Borel measure), namely
\[
f(x) = \int_0^{+\infty} e^{-xt} \, \d\mu(t),
\]
and the integral converges for every $x \in (0,{+\infty})$.
\end{theorem}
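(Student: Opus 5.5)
We will prove the two implications separately and then address uniqueness.

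\emph{Laplace transforms are completely monotonic.} Suppose $f(x)=\int_0^{+\infty}e^{-xt}\,\d\mu(t)$ converges for every $x>0$. We differentiate under the integral sign. Fix $x>0$ and choose $0<x'<x$. Then $t^k e^{-xt}\le C_{k,x,x'}\,e^{-x't}$ for all $t\ge 0$, and $e^{-x't}$ is $\mu$-integrable. Dominated convergence therefore gives $f\in C^\infty(0,+\infty)$ with
\[
(-1)^k f^{(k)}(x)=\int_0^{+\infty}t^k e^{-xt}\,\d\mu(t)\ge 0 .
\]

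\emph{Completely monotonic functions are Laplace transforms.} Let $f$ be completely monotonic on $(0,+\infty)$.

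First, $f$ is non-increasing and non-negative, so $f(\infty):=\lim_{x\to+\infty}f(x)$ exists. Likewise each $(-1)^k f^{(k)}$ is non-negative and non-increasing. A standard argument, integrating $(-1)^{k}f^{(k)}$ over $[s/2,s]$, yields $s^k f^{(k)}(s)\to 0$ as $s\to+\infty$ for every $k\ge 1$.

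Next, Taylor's formula with integral remainder, taken between $x$ and $a$ and letting $a\to+\infty$, gives for every $n\ge 1$
\[
f(x)-f(\infty)=\int_x^{+\infty}\frac{(s-x)^{n-1}}{(n-1)!}(-1)^n f^{(n)}(s)\,\d s .
\]
The boundary terms vanish by the decay just established.

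Then we substitute $s=n/t$. This rewrites the identity as
\[
f(x)-f(\infty)=\int_0^{+\infty}\Bigl(1-\tfrac{xt}{n}\Bigr)_+^{n-1}\,\d\mu_n(t),
\]
where $\mu_n$ is the explicit non-negative measure with density $(-1)^n f^{(n)}(n/t)\,\frac{n^{n}}{(n-1)!\,t^{n+1}}$.

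For fixed $x$ and large $n$, the kernel $(1-xt/n)_+^{n-1}$ dominates $e^{-2xt}$ on bounded $t$-sets. Hence the masses $\int e^{-2xt}\,\d\mu_n$ are uniformly bounded by a multiple of $f(x)$ for each $x$. Helly's selection theorem on the compactification $[0,+\infty]$, applied to the weighted measures $e^{-t}\mu_n$, then produces a subsequence converging weakly to a limit. Undoing the weight gives a measure $\mu$ on $[0,+\infty)$.

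Since $(1-xt/n)_+^{n-1}\to e^{-xt}$ uniformly on $[0,+\infty)$, we may pass to the limit in the identity. This gives $f(x)-f(\infty)=\int e^{-xt}\,\d\mu(t)$. Finally we add the atom $f(\infty)\delta_0$ to $\mu$.

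\emph{Uniqueness.} Suppose $\mu$ and $\nu$ have the same Laplace transform. Fix $x_1>0$ and consider the finite measures $e^{-x_1 t}\mu$ and $e^{-x_1 t}\nu$. They agree on every function $e^{-jt}$, $j>0$. These functions, together with the constants, form a point-separating algebra that is dense in $C([0,+\infty])$ by Stone--Weierstrass. Hence the two finite measures coincide, and so $\mu=\nu$.

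The main obstacle is the limit passage in the second step, namely controlling where the mass of $\mu_n$ goes. Mass escaping to $t=+\infty$ is harmless, because $e^{-xt}\to 0$ there for every $x>0$. Mass accumulating at $t=0$ must be shown to match $f(\infty)$ correctly rather than be double-counted. We will check this by using the identity at two different values of $x$ and the uniform convergence of the kernels.
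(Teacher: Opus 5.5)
The paper gives no proof of this theorem; it quotes it from the literature. Your outline is the classical Taylor-formula/Helly proof. The easy direction, the decay $s^kf^{(k)}(s)\to0$, the identity $f(x)-f(\infty)=\int(1-xt/n)_+^{n-1}\,\mathrm{d}\mu_n(t)$ and your Stone--Weierstrass uniqueness argument are all fine.

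\textbf{The gap is the compactness step.} From ``$(1-xt/n)_+^{n-1}\ge e^{-2xt}$ on bounded $t$-sets'' you conclude that $\int e^{-2xt}\,\mathrm{d}\mu_n$ is bounded by a multiple of $f(x)$, uniformly in $n$. But the domination only holds for $t\le n/(2x)$, and the kernel vanishes for $t\ge n/x$. The part of $\mu_n$ on $t>n/(2x)$ is governed by $(-1)^nf^{(n)}(s)$ for $s=n/t<2x$, which can blow up arbitrarily fast as $s\to0^+$.

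The claim is in fact false. Take $\mu(\mathrm{d}r)=e^{r/\log r}\mathbf{1}_{[e,\infty)}(r)\,\mathrm{d}r$ and $f(\lambda)=\int e^{-\lambda r}\,\mu(\mathrm{d}r)$, which is finite for every $\lambda>0$. Your measures are then $\mu_n(\mathrm{d}t)=\bigl(\int\frac{(nr)^n}{(n-1)!\,t^{n+1}}e^{-nr/t}\,\mu(\mathrm{d}r)\bigr)\,\mathrm{d}t$. Restricting the $t$-integral to $[\sqrt{nr},2\sqrt{nr}]$ gives $\int e^{-t}\,\mathrm{d}\mu_n\ge c_n\int r^{n/2}e^{-3\sqrt{nr}}\,\mu(\mathrm{d}r)=+\infty$ for every $n$. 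So the weighted measures $e^{-t}\mu_n$ need not even be finite, and Helly on $[0,+\infty]$ is unavailable.

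Even with bounded weighted masses, the weight $e^{-t}$ only handles $x>1$. For $x\le1$ the kernels $e^{t}(1-xt/n)_+^{n-1}$ do not converge uniformly to a function vanishing at $+\infty$. So ``mass escaping to $+\infty$ is harmless'' is exactly the uniform tail estimate that is still missing.

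\textbf{Two standard repairs.}

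(i) First assume $f(0^+)<\infty$. Letting $x\downarrow0$ in your identity (monotone convergence) gives $\mu_n((0,\infty))=f(0^+)-f(\infty)$. Helly on $[0,+\infty]$ then applies to the $\mu_n$ themselves. The kernels, extended by $0$ at $+\infty$, converge uniformly on $[0,+\infty]$, so the limit passage is immediate. Letting $x\to+\infty$ in the resulting identity shows the limit has no atom at $0$, so adding $f(\infty)\delta_0$ involves no double counting. For general $f$, apply this to $f(\cdot+\varepsilon)$. Then use your uniqueness argument, which only needs the transforms to agree on a half-line, to see that $e^{\varepsilon t}\mu_\varepsilon(\mathrm{d}t)$ does not depend on $\varepsilon$.

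(ii) Alternatively, stay on $[0,+\infty)$. For $n\ge2xT$ one has $\mu_n([0,T])\le e^{2xT}(f(x)-f(\infty))$, which gives a vaguely convergent subsequence. For $0<y<x$ and $n\ge2$, the bound $(1-xt/n)_+^{n-1}\le e^{-(x-y)t/2}(1-yt/n)_+^{n-1}$ gives $\int_{(T,\infty)}(1-xt/n)_+^{n-1}\,\mathrm{d}\mu_n\le e^{-(x-y)T/2}(f(y)-f(\infty))$ uniformly in $n$. This is where using the identity at two values of $x$ is really needed: for the tail at $+\infty$, not for the atom at $0$.
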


\subsection{Measure Contraction Property (MCP)}\label{2.2}
Denote by
\begin{equation*}
	\text{Geo}(X):=\Big\{\gamma\in C([0,1],X):\d(\gamma_s,\gamma_t)=|s-t|\d(\gamma_0,\gamma_1), \,\,\forall s,t\in [0,1]\Big\}
\end{equation*}
the space of constant-speed geodesics. In this subsection, we assume that $(X,\d)$ is a geodesic space, meaning that for any $x,y \in X$ there exists $\gamma\in \text{Geo}(X)$ such that $\gamma_0=x$ and $\gamma_1=y$.

Denote by $\mathscr{P}(X)$ the space of all Borel probability measures on $X$ and by $\mathscr{P}_2(X)$ the space of probability measures with finite second moment. The $L^2$-Kantorovich--Wasserstein distance $W_2$ is defined as follows: for any $\mu_0,\mu_1 \in \mathscr{P}_2(X)$, set
\begin{equation}\label{equation2-1}
	W^2_2(\mu_0,\mu_1):= \inf\limits_{\pi} \int_{X\times X}  \d^2(x,y)\, \mathrm{d}\pi(x,y),
\end{equation}
where the infimum is taken over all $\pi \in\mathscr{P}(X\times X)$ with marginals $\mu_0$ and $\mu_1$.

For any geodesic $(\mu_t)_{t\in[0,1]}$ in $(\mathscr{P}_2(X),W_2)$, there exists $\nu \in \mathscr{P}(\text{Geo}(X))$ such that $(e_t)_\sharp \nu=\mu_t$ for all $t\in [0,1]$, where $e_t$ is the evaluation map
\begin{equation*}
	e_t:\text{Geo}(X) \rightarrow X,\quad e_t(\gamma):=\gamma_t.
\end{equation*}
We denote by $\mathrm{OptGeo}(\mu_0,\mu_1)$ the space of all $\nu \in \mathscr{P}(\text{Geo}(X))$ for which $(e_0,e_1)_\sharp \nu$ achieves the minimum in \eqref{equation2-1}; such a $\nu$ will be called a dynamical optimal plan. If $(X,\d)$ is geodesic, then $\mathrm{OptGeo}(\mu_0,\mu_1)$ is non-empty for any $\mu_0,\mu_1 \in \mathscr{P}_2(X)$.
\medskip

Recall the following notion of essentially non-branching \cite{MR3216835}.
\begin{definition}[Non-branching geodesic]
A set $G \subset \mathrm{Geo}(X)$ is a set of non-branching geodesics if, for any $\gamma^1,\gamma^2 \in G$, the following holds:
\[
\exists \, t \in (0,1)\ \, \text{such that}\ \,\forall \, s\in [0,t]\ \gamma_s^1 = \gamma_s^2
\Rightarrow
\forall \, s \in [0,1]\ \gamma_s^1 = \gamma_s^2.
\]
\end{definition}

\begin{definition}[Essentially non-branching space]
A metric measure space $(X,\d,\mm)$ is called essentially non-branching if, for any $\mu_0,\mu_1 \in \mathscr{P}_2(X)$ with $\mu_0, \mu_1\ll \mm$, every element of $\mathrm{OptGeo}(\mu_0,\mu_1)$ is concentrated on a set of non-branching geodesics.
\end{definition}

\medskip

The notion of the measure contraction property $\mathrm{MCP}(K,N)$ was proposed independently by Ohta and Sturm in \cite{MR2341840} and \cite{MR2237207} as a synthetic notion of lower Ricci curvature bounds. In general, these two definitions are slightly different, but on essentially non-branching spaces they coincide (see, for instance, Appendix~A in \cite{MR3691502} or Proposition~9.1 in \cite{MR4309491}). We adopt the one in \cite{MR2341840}. Here we only present the case $K=0$.

\begin{definition}[Measure contraction property]\label{define2-2}
	Let $N\in [1,{+\infty})$. We say that a metric measure space $(X,\d,\mm)$ satisfies $\mcp$ if for any $\mu_0 \in \mathscr{P}_2(X)$ of the form
\begin{equation*}
	\mu_0=\frac{1}{\mm(A)}\mm\llcorner_A,~~~A\subset X~\text{is Borel, and}~~\mm(A)\in(0,{+\infty})
\end{equation*}
and any $o\in X$, there exists $\nu \in \mathrm{OptGeo}(\mu_0,\delta_o)$ such that
\begin{equation*}
	\frac{1}{\mm(A)}\mm\geq  (e_t)_\sharp \left((1-t)^N\nu(\mathrm{d}\gamma) \right) \quad \forall \, t\in [0,1].
\end{equation*}
\end{definition}

From \cite{MR4309491}, we know that, in the setting of essentially non-branching spaces, Definition \ref{define2-2} is equivalent to the following: for all $\mu_0,\mu_1\in \mathscr{P}_2(X)$ with $\mu_0\ll \mm$, there exists a unique $\nu \in \mathrm{OptGeo}(\mu_0,\mu_1)$ such that, for all $t \in [0,1)$, $\mu_t=(e_t)_\sharp \nu\ll \mm$ and
\begin{equation}\label{mcp}
	\rho_t^{-\frac{1}{N}}(\gamma_t)\geq (1-t)\rho_0^{-\frac{1}{N}}(\gamma_0),\quad \text{for}\,\,\nu\text{-}a.e.\,\gamma \in \text{Geo}(X),
\end{equation}
where $\d \mu_t=\rho_t \d \mm$.

\medskip

In order to prove our main results, we need a corollary of the powerful needle decomposition theorem; here we only consider the simplest $1$-Lipschitz function $\d_{x_0}$. We refer to Cavalletti--Mondino \cite[Theorem 3.6]{MR4175820} for more general cases.

\begin{theorem}[Needle decomposition theorem]\label{theorem3}
	Let $(X,\d,\mm)$ be an essentially non-branching ${\rm MCP}(0,N)$ metric measure space for some $N\in(1,{+\infty})$. Then, for any $x_0\in X$ and $R>0$, there exist an $\mm$-measurable transport subset $\mathcal{T} \subset B_R(x_0)$ and a family $\{X_\alpha\}_{\alpha\in Q}$ of subsets of $B_R(x_0)$ such that
\begin{enumerate}
\item there exists a disintegration of $\mm$
\begin{equation*}
	\mm\llcorner_{\mathcal{T}}=\int_Q \mm_\alpha\, \d\mathfrak{q}(\alpha) ,\qquad \mathfrak{q}(Q)=1;
\end{equation*}
\item $\mm(B_R(x_0)\setminus \mathcal{T})=0$;
\item for $\mathfrak{q}$-a.e.\ $\alpha\in Q$, $X_\alpha$ is a closed geodesic with an extremal point $x_0$;
\item for $\mathfrak{q}$-a.e.\ $\alpha\in Q$, $\mm_\alpha$ is a Radon measure supported on $X_\alpha$ with $\mm_\alpha=h_\alpha \mathcal{H}^1\llcorner_{X_\alpha}$ and $h_\alpha \mathcal{H}^1\llcorner_{X_\alpha}\ll \mathcal{H}^1\llcorner_{X_\alpha}$;
\item for $\mathfrak{q}$-a.e.\ $\alpha\in Q$, the metric measure space $(X_\alpha,\d,\mm_\alpha)$ verifies ${\rm MCP}(0,N)$.
\end{enumerate}
Here $\mathcal{H}^1$ denotes the one-dimensional Hausdorff measure, $\{X_\alpha\}_{\alpha\in Q}$ are called transport rays, and two distinct transport rays can meet only at $x_0$.
\end{theorem}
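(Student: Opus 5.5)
The last statement before the paper breaks off is the Needle decomposition theorem (Theorem \ref{theorem3}). The paper labels it a corollary of Cavalletti--Mondino \cite[Theorem 3.6]{MR4175820}, so the plan is to specialise that general $L^1$-localisation result to the $1$-Lipschitz function $u=\d_{x_0}$, restricted to the ball $B_R(x_0)$, rather than to reprove the decomposition from scratch.

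\emph{Step 1: the transport set.} I would take $u=\d_{x_0}$ and consider the transport relation
\[
\Gamma=\{(x,y):\ |u(x)-u(y)|=\d(x,y)\}.
\]
Two points are related exactly when one lies on a geodesic from $x_0$ through the other. The transport set $\mathcal T$ is the set of points lying in the interior of such rays, with branching points removed, and then intersected with $B_R(x_0)$.

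\emph{Step 2: $\mathcal T$ has full measure.} Cavalletti--Mondino show that the non-branched transport set has full measure in the set where $u$ is non-constant along the ordering. This uses essential non-branching together with $\mathrm{MCP}$: the set of forward or backward branching points is $\mm$-negligible, which one proves by contracting measures toward $x_0$ and invoking \eqref{mcp}. Every point $x\neq x_0$ lies on some geodesic from $x_0$ in the geodesic space, so no point is excluded for being outside all rays. The point $x_0$ itself is $\mm$-negligible, since $\mathrm{MCP}(0,N)$ with $N<\infty$ gives no atoms. Together these give item (2).

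\emph{Step 3: rays and disintegration.} On the non-branched transport set, $\Gamma$ is an equivalence relation. Its classes $X_\alpha$ are geodesics; each one extends to $x_0$ because $u$ decreases with unit slope along it down to $0$. Taking closures and intersecting with $B_R(x_0)$ gives closed geodesic segments with endpoint $x_0$, which is item (3). Any two distinct classes can share only $x_0$, because sharing any other point would force branching.

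A measurable quotient map exists by a selection argument, since $X$ is Polish. Disintegrating $\mm\llcorner_{\mathcal T}$ with respect to the quotient gives item (1), after normalising $\mathfrak q$ to a probability measure; this is possible because $\mm(B_R(x_0))<\infty$.

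\emph{Step 4: needle densities.} This is the main obstacle. One must show that $\mm_\alpha$ is absolutely continuous with respect to $\mathcal H^1\llcorner_{X_\alpha}$, with a density $h_\alpha$ satisfying the one-dimensional $\mathrm{MCP}(0,N)$ condition: along the ray parametrised from its far end toward $x_0$, the function $h_\alpha^{1/(N-1)}$ obeys the $\mathrm{MCP}$ concavity inequality.

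I would follow Cavalletti--Mondino here. Apply \eqref{mcp} to $\mu_0=\mm\llcorner_A/\mm(A)$ contracted toward $x_0$, with $A$ a union of pieces of rays. Because rays do not branch and the optimal geodesics to $\delta_{x_0}$ run along the rays, the contraction preserves the ray structure. The MCP inequality then passes to the conditional measures for $\mathfrak q$-a.e.\ $\alpha$, via a countable dense family of sets and uniqueness of the disintegration. Absolute continuity follows from the same inequality, which prevents mass from concentrating. This gives items (4) and (5).
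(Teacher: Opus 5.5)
Your plan matches the paper's, which gives no independent proof and simply specialises Cavalletti--Mondino \cite[Theorem 3.6]{MR4175820} to the $1$-Lipschitz function $\d_{x_0}$, localised to $B_R(x_0)$ with $\mathfrak q$ normalised; citing that theorem is all that is needed. Do not present Step 4 as a self-contained argument, though: contracting toward $x_0$ alone gives only $h_\alpha((1-t)r)\ge(1-t)^{N-1}h_\alpha(r)$ in the arc-length parameter $r=\d_{x_0}$ (the MCP inequality toward the vertex), whereas item (5) and \eqref{1dmcp} require it toward every point of the ray, and that comes from the cited theorem, not from contraction to $x_0$.
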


It is worth recalling that, if $h_\alpha$ is an ${\rm MCP}(0,N)$ density on $I\subset \R$, then for all $x_0, x_1 \in I$ and $t\in [0,1]$,
\begin{equation}\label{1dmcp}
	h_\alpha(tx_1+(1-t)x_0)\geq (1-t)^{N-1}h_\alpha(x_0).
\end{equation}

At the end of this part, we recall the generalised Bishop--Gromov volume growth inequality (cf.\ \cite[Remark 5.3]{MR2237207}) and the definition of a volume cone.
\begin{theorem}[Generalised Bishop--Gromov inequality]\label{theorem4}
Let $N\in (1,+\infty)$, and $(X,\d,\mm)$ be a metric measure space satisfying ${\rm MCP}(0,N)$. Then, for any $x\in X$,
\begin{equation}\label{equation2-2}\tag{GBGI}
	\frac{\mm(B_{R}(x))}{R^N}
    \leq\frac{\mm(B_{r}(x))}{r^N},\quad \forall \, 0<r<R.
\end{equation}
\end{theorem}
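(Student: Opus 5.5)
Since the needle decomposition (Theorem~\ref{theorem3}) is only stated for essentially non-branching spaces, I will not use it. The plan is to prove \eqref{equation2-2} directly from Definition~\ref{define2-2}, applied with $o=x$ and with $\mu_0$ the normalised restriction of $\mm$ to the ball $B_R(x)$. Fix $0<r<R$, set $A=B_R(x)$ and $t:=1-r/R\in(0,1)$, and note that $\mm(A)\in(0,+\infty)$ since $\mm$ is Radon with full support. Definition~\ref{define2-2} then gives $\nu\in\mathrm{OptGeo}(\mu_0,\delta_x)$ satisfying
\[
\frac{1}{\mm(A)}\mm \ \ge\ (e_t)_\sharp\big((1-t)^N\nu\big).
\]

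The key step is a support estimate for this interpolation. Every $\gamma$ in the support of $\nu$ has $\gamma_0\in \overline{A}$ and $\gamma_1=x$. Because $\gamma$ is a constant-speed geodesic, $\d(\gamma_t,x)=(1-t)\d(\gamma_0,x)$. For $\nu$-a.e.\ $\gamma$ we have $\gamma_0\in A$ (since $\mu_0(A)=1$), so $\d(\gamma_0,x)<R$ and hence $\d(\gamma_t,x)<(1-t)R=r$. It follows that $(e_t)_\sharp\nu$ is concentrated on $B_r(x)$ and has total mass $1$.

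Next, I evaluate the inequality of measures on the Borel set $B_r(x)$:
\[
\frac{\mm(B_r(x))}{\mm(B_R(x))}\ \ge\ (1-t)^N\,\nu\big(e_t^{-1}(B_r(x))\big)=(1-t)^N=\Big(\frac rR\Big)^N .
\]
Rearranging gives exactly \eqref{equation2-2}.

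I expect the only delicate points to be measure-theoretic, since the geometry is immediate. One is checking that $\mu_0\in\mathscr P_2(X)$, which holds because $A$ is bounded. The other is the case where a geodesic starts on $\partial B_R(x)$; this is harmless because $\mu_0(A)=1$ means such geodesics form a $\nu$-null set. If one prefers, one can instead work with closed balls and pass to open balls by monotone limits in $r$ and $R$. No non-branching hypothesis is needed, which matches the statement of Theorem~\ref{theorem4}.
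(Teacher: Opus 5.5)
Your argument is correct and complete: one application of Definition~\ref{define2-2} to $A=B_R(x)$ with $t=1-r/R$ gives the claim.

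The paper itself gives no proof of Theorem~\ref{theorem4}; it only cites \cite[Remark 5.3]{MR2237207}, where the Bishop--Gromov estimate is stated for a general curvature bound $K$. For $K\neq 0$ the distortion coefficient depends on the length $\d(\gamma_0,x)$ of the geodesic. In that setting the usual argument does not contract the whole ball at once. Instead it contracts thin annuli at radius $R$ onto thin annuli at radius $r$, compares the resulting shell densities with the model density, and then integrates in the radius.

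Your route exploits the fact that for $K=0$ the factor $(1-t)^N$ is independent of the geodesic's length. Hence a single contraction of the full ball already yields the sharp ratio $(r/R)^N$. The strict inequality $\d(\gamma_0,x)<R$ for $\nu$-a.e.\ $\gamma$ is exactly what places $\gamma_t$ in the open ball $B_r(x)$ and makes the push-forward have full mass there.

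What your approach buys is a self-contained, elementary proof with three features. It works for every $N\ge 1$. It needs no non-branching assumption and no disintegration. It is carried out directly for the Ohta-type definition the paper adopts. The cited remark, by contrast, is formulated for Sturm's version of ${\rm MCP}$, which the paper identifies with Ohta's only on essentially non-branching spaces, and that hypothesis is absent from Theorem~\ref{theorem4}; your argument sidesteps that identification. What the cited route buys in exchange is the general-$K$ statement, which the paper never uses.
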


\begin{definition}[$N$-volume cone]
Given $N\in [1,{+\infty})$, we say that a metric measure space is an $N$-volume cone if there exists $x_0\in X$ such that
\begin{equation*}
	\frac{\mm(B_{R}(x_0))}{\mm(B_{r}(x_0))}=\left(\frac{R}{r}\right)^N, \quad \forall \,0<r<R.
\end{equation*}
In this case, we also say that the metric measure space is an $N$-volume cone with vertex $x_0$.
\end{definition}

\vspace{0.3cm}

\section{Proof of main results}\label{proof}
\subsection{Proof of Theorem \ref{Theorem1}}\label{3.1}

It suffices to prove the case $\ell = 0$, because for any fixed $\ell \in \N \setminus \{0\}$ we still have
\[
\int_X e^{-2 \lambda \d_{x_0}^2} \, \d\mm^\ell < {+\infty},
\]
and, for any $k \in \N$, it holds that
\begin{align}\label{sequHUP3}
    \left(\int_X \d_{x_0}^{2k} \, |\mathrm{lip}\, u|^2 \, \d \mm^\ell \right)\left(\int_X \d_{x_0}^{2k + 2} |u|^2 \, \d \mm^\ell\right)
    \ge (C + \ell + k)^2 \left( \int_X \d_{x_0}^{2k} |u|^2 \, \d \mm^\ell \right)^2.
\end{align}
Consequently, we can work on $(X,\d,\mm^\ell)$ and apply the result for $\ell=0$ to this space. Noting that the constant $C$ changes to $C+\ell$, we see that the case $\ell \in \N \setminus \{0\}$ follows from the case $\ell=0$. Therefore, it remains to prove the case $\ell=0$.

For convenience, we restate Theorem \ref{Theorem1} with $\ell=0$ as follows.

\begin{theorem}\label{thm Lap}
Let $(X,\d,\mm)$ be a metric measure space, and fix $x_0 \in X$. Assume that, for any $\lambda > 0$,
\[
\int_X e^{-2 \lambda \d_{x_0}^2} \, \d\mm < {+\infty},
\]
and that there exists a constant $C>0$, independent of $k$, such that for any $k \in \N$ and any $u\in \mathrm{Lip}_b(X,\d)$,
\begin{align}\label{sequHUP2}
    \left(\int_X \d_{x_0}^{2k} \, |\mathrm{lip}\, u|^2 \, \d \mm \right)\left(\int_X \d_{x_0}^{2k + 2} |u|^2 \, \d \mm\right)
    \ge (C + k)^2 \left( \int_X \d_{x_0}^{2k} |u|^2 \, \d \mm \right)^2
\end{align}
holds.
Then the function $\rho \mapsto \frac{\mm(B_\rho(x_0))}{\rho^{2C}}$ is non-decreasing on $(0,{+\infty})$.
\end{theorem}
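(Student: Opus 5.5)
The plan is to test the inequality \eqref{sequHUP2} only on the Gaussian class $E_{G,x_0}$ and to turn the resulting family of scalar inequalities into a complete-monotonicity statement, to which Theorem~\ref{tBern} (Bernstein's theorem) applies. Put $V(\rho):=\mm(B_\rho(x_0))$ and, for $s>0$,
\[
F(s):=\int_X e^{-s\d_{x_0}^2}\,\d\mm=\int_0^{+\infty} e^{-st}\,\d\bigl(V(\sqrt t)\bigr)=s\int_0^{+\infty}e^{-st}\,V(\sqrt t)\,\d t .
\]
The identity for $F$ follows from the layer cake representation, and the integrability hypothesis guarantees that $F$ and all its derivatives are finite. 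Differentiating under the integral sign gives $(-1)^kF^{(k)}(s)=\int_X \d_{x_0}^{2k}e^{-s\d_{x_0}^2}\,\d\mm$.

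\textbf{Step 1 (testing).} Take $u=e^{-\lambda\d_{x_0}^2}$, truncated by a Lipschitz cutoff $\eta_R(\d_{x_0})$ with $R\to\infty$. The tails vanish thanks to the Gaussian integrability, so this truncation is harmless. Since $\d_{x_0}$ is $1$-Lipschitz, we have $\mathrm{lip}\,u\le 2\lambda\d_{x_0}e^{-\lambda\d_{x_0}^2}$. Writing $s=2\lambda$, inequality \eqref{sequHUP2} then becomes
\[
s\,(-1)^{k+1}F^{(k+1)}(s)\ \ge\ (C+k)\,(-1)^kF^{(k)}(s),\qquad \forall\, s>0,\ k\in\N .
\]
Equivalently, each function $\varphi_k(s):=s^{C+k}(-1)^kF^{(k)}(s)$ satisfies $\varphi_k'\le0$; indeed $\varphi_k'(s)=s^{C+k-1}\bigl[(C+k)(-1)^kF^{(k)}-s(-1)^{k+1}F^{(k+1)}\bigr]$.

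\textbf{Step 2 (the target as a Laplace transform).} Let $W(t):=V(\sqrt t)\,t^{-C}$. The claim is that $W$ is non-decreasing, that is, $\d W=:\mu\ge0$ as a measure on $(0,\infty)$. Writing $V(\sqrt t)=t^{C}W(t)$ and integrating by parts expresses $F$ as a Laplace-type transform of $t^C\,\d W$ plus a term involving $W$ itself. I would therefore look for an auxiliary function $G$, built from $F$ by a fixed differential operator in $s$, such that $G(s)=\int_0^\infty e^{-st}\,\d\mu(t)$, possibly after multiplication by a power of $s$. The natural candidates are $G(s)=-\frac{\d}{\d s}\bigl(s^{C}F(s)\bigr)$ or a variant obtained through the substitution $x=1/s$. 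The latter is suggested by the classical identity
\[
\Bigl(\tfrac{\d}{\d x}\Bigr)^{k}\bigl[x^{k-1}f(1/x)\bigr]=(-1)^k x^{-k-1}f^{(k)}(1/x),
\]
which converts the weights $s^{k}$ appearing in $\varphi_k$ into plain derivatives. Once such a $G$ is identified, the uniqueness part of Theorem~\ref{tBern} identifies the representing measure of $G$ with $\mu$. Hence $\mu\ge0$, provided that $G$ is completely monotonic.

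\textbf{Step 3 (complete monotonicity, the main obstacle).} It remains to show $(-1)^jG^{(j)}\ge0$ for all $j$. Here one must express $(-1)^jG^{(j)}$, via the Leibniz rule, as a non-negative combination of the quantities $-\varphi_k'\ge0$, $k\le j$, each multiplied by positive powers of $s$. This is the step I expect to be delicate. The algebra must close up exactly with the constants $C+k$, and this is precisely why the inequality is needed for every $k$ with the \emph{same} $C$; Remark~\ref{con} indicates that dropping $k=0$ breaks the conclusion. I would first try an induction on $j$, using $s\varphi_k'$ and the relation $\varphi_{k+1}=-s\,\varphi_k'+(\cdots)$. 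Only if the bookkeeping fails would I switch to the $x=1/s$ formulation above.

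\textbf{Step 4 (from measures to pointwise monotonicity).} Finally, $\mu\ge0$ gives that $W$ is non-decreasing, up to right-continuity. Since $V$ is left-continuous on open balls, monotonicity holds everywhere, and substituting $t=\rho^2$ yields that $\rho\mapsto V(\rho)/\rho^{2C}$ is non-decreasing on $(0,+\infty)$.
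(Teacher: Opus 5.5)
Your Steps~1 and~4 are sound and match the paper. Your inequality $s\,T_{k+1}\ge (C+k)\,T_k$, with $T_k:=(-1)^kF^{(k)}$, is the paper's $\lambda S_{k+1}\ge (C+k)S_k$ after $s=2\lambda$. The gap is that Steps~2--3, which carry the whole proof, are left as a search, and both the target and the concrete candidate you name there are wrong.

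\textbf{The target measure.} You should not aim at the Laplace transform of $\d W$ itself. Dividing by $t^{C}$ is a nonlocal (integral) operation on the Laplace side, so no fixed differential expression in $F$ produces it. What is accessible is
\[
\nu:=t^{C+1}\,\d W(t)=t\,\d\bigl(V(\sqrt t)\bigr)-C\,V(\sqrt t)\,\d t,\qquad H(s):=\int_0^{+\infty}e^{-st}\,\d\nu(t)=-F'(s)-\frac{C}{s}\,F(s).
\]
This is $\frac12R(\lambda)$ for the paper's $R(\lambda)=-P'(\lambda)-CP(\lambda)/\lambda$, with $s=2\lambda$. Uniqueness in Bernstein's theorem identifies the representing measure with $\nu$, not with $\d W$. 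One then recovers $W(w)-W(v)=\int_v^w t^{-C-1}\,\d\nu(t)\ge0$ by integration by parts.

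\textbf{The candidate.} Your hedge ``possibly after multiplication by a power of $s$'' is therefore essential. Your first candidate $G=-\frac{\d}{\d s}\bigl(s^CF\bigr)=-\varphi_0'=s^CH$ is \emph{not} completely monotonic in general. Write $D_k:=sT_{k+1}-(C+k)T_k\ge0$. Then $D_k'=-D_{k+1}$, $G=s^{C-1}D_0$, and $-G'=s^{C-1}D_1-(C-1)s^{C-2}D_0$, which has no sign when $C>1$.

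For a concrete failure, take $C>1$, $X=[0,+\infty)$, $x_0=0$, and $\d\mm=r^{2C-1}\phi(r)\,\d r$, where $\phi$ is smooth, non-decreasing, $\phi=1$ on $[0,1]$ and $\phi=2$ on $[2,+\infty)$.
\begin{itemize}
\item The hypotheses hold for every $k$: since $(r^{2k+1}\cdot r^{2C-1}\phi)'\ge 2(C+k)\,r^{2k}\cdot r^{2C-1}\phi$, one integration by parts and Cauchy--Schwarz give the inequality.
\item $\nu$ has constant density $\kappa:=C\int_0^2 r^{2C-1}(2-\phi(r))\,\d r>0$ on $[4,+\infty)$.
\item Hence $0<G(s)\le s^C\nu([0,4])+\kappa s^{C-1}\to0$ as $s\to0^+$, so $G$ is not even non-increasing.
\end{itemize}

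\textbf{What is missing.} You need to work with $H=s^{-C}G$ and prove it is completely monotonic. Your Step~3 idea does close for $H$, but with \emph{negative} powers of $s$. From $(-1)^jH^{(j)}=-\frac{\d}{\d s}\bigl((-1)^{j-1}H^{(j-1)}\bigr)$, $H=D_0/s$ and $D_k'=-D_{k+1}$, induction on $j$ gives
\[
(-1)^jH^{(j)}(s)=\sum_{k=0}^{j}\frac{j!}{k!}\,s^{k-j-1}D_k(s)=s^{-C-j}\sum_{k=0}^{j}\frac{j!}{k!}\bigl(-\varphi_k'(s)\bigr)\ge0 .
\]
This is the closed form of the paper's inductive inequality $S_{k+1}\ge C\sum_{i=0}^{k}\frac{k!}{i!}S_i\lambda^{i-k-1}$. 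Each $D_k\ge0$ is the $k$-th hypothesis with the same constant $C$, which is exactly where uniformity in $k$ enters. Until this choice of $H$ and this computation are supplied, your proposal is a correct strategy rather than a proof. The $x=1/s$ detour is not needed.
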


\begin{proof}
We define the auxiliary function
\[
    P(\lambda ) := \int_X e^{-2 \lambda \d_{x_0}^2} \, \d \mm.
\]
By assumption, $P$ is well defined on $(0,{+\infty})$. In what follows, we write $\overline{B}_{r}(x):=\{y\in X:\d(y,x)\le r\}$. Then, by Chebyshev's inequality, we have
\[
e^{-2 \lambda R^2}\,\mm(\overline{B}_R(x_0)) \le P(\lambda),\qquad \forall\, R > 0.
\]
Hence, for any $\delta > 0$, we have 
\begin{align}\label{growth}
\mm(\overline{B}_R(x_0)) \le P(\delta)\, e^{2 \delta R^2}, \qquad \forall\, R > 0.
\end{align}
Then, by Fubini's theorem and a change of variables, we obtain
\begin{align*}
  P(\lambda) &= \int_X \int_{\d_{x_0}}^{+\infty} 4\lambda \rho\, e^{-2\lambda \rho^2} \, \d\rho \, \d \mm \\
  &= 4 \lambda \int_0^{+\infty} \mm(\overline{B}_\rho(x_0))\,\rho\, e^{-2\lambda \rho^2} \, \d\rho \\
  &= \lambda \int_0^{+\infty} \mm\!\left(\overline{B}_{\sqrt{\frac{\tau}{2}}}(x_0)\right) e^{-\lambda \tau} \, \d \tau.
\end{align*}
Noting that the final integral is a Laplace transform, we conclude that $P$ is smooth on $(0,{+\infty})$. Define
\[
f_{x_0}(\tau) := \mm\left(\overline{B}_{\sqrt{\frac{\tau}{2}}}(x_0)\right),\qquad
Q(\lambda) := \frac{P(\lambda)}{\lambda},\qquad
R(\lambda) := -\lambda Q'(\lambda) - (C + 1) Q(\lambda).
\]
Using \eqref{growth} with $\delta = \frac{\lambda}{2}$, we have
\[
\lim_{\tau \to +\infty} \tau e^{-\lambda \tau} f_{x_0}(\tau) = 0, \qquad \forall \, \lambda > 0.
\]
Then, by integration by parts, we obtain
\begin{align*}
R(\lambda) &= -\lambda Q'(\lambda) - (C + 1) Q(\lambda) \\
&= -\int_0^{+\infty} f_{x_0}(\tau)\,(1-\tau \lambda)\, e^{-\lambda \tau} \, \d \tau
   - C \int_0^{+\infty} f_{x_0}(\tau) e^{-\lambda \tau} \, \d\tau \\
&= -\int_0^{+\infty} f_{x_0}(\tau)\, \d (\tau e^{-\lambda \tau})
   - C \int_0^{+\infty} f_{x_0}(\tau) e^{-\lambda \tau} \, \d\tau \\
&= \int_0^{+\infty} \tau e^{-\lambda \tau} \, \d f_{x_0}(\tau)
   - C \int_0^{+\infty} f_{x_0}(\tau) e^{-\lambda \tau} \, \d \tau.
\end{align*}

We claim that $R$ is completely monotonic, namely that $(-1)^k R^{(k)} \ge 0$ for any $k\in \N$. Once this is proved, Bernstein's theorem (Theorem \ref{tBern}) implies that there exists a non-negative Borel measure $\mu$ on $\R$ such that
\[
\tau \, \d f_{x_0}(\tau) - C f_{x_0}(\tau) \, \d\tau = \d \mu(\tau).
\]
It follows that for any $ 0<v \le w$,
\begin{align*}
   0 \le \int_v^w \tau^{-C - 1} \,  \d \mu(\tau)
   &= \int_v^w \tau^{-C } \,   \d f_{x_0}(\tau) -  C \int_v^w \tau^{-C - 1} f_{x_0}(\tau) \, \d\tau \\
   &= w^{-C} f_{x_0}(w) - v^{-C} f_{x_0}(v),
\end{align*}
where the last equality follows from integration by parts. As a result, we obtain
\[
\frac{f_{x_0}(v)}{v^C} \le \frac{f_{x_0}(w)}{w^C}, \qquad \forall \, 0<v \le w,
\]
which in turn implies
\[
\frac{f_{x_0}(v^-)}{v^C} \le \frac{f_{x_0}(w^-)}{w^C}, \qquad \forall \, 0<v \le w.
\]
From definition it is not hard to prove that
\[
f_{x_0}(v^-) :=  \lim_{u \to v^-} f_{x_0}(u) = \mm\!\left(B_{\sqrt{\frac{v}{2}}}(x_0)\right), \qquad \forall \, v \in (0,+\infty).
\]
Finally, for any $0<s \le t$, we have
\[
\frac{\mm(B_{s}(x_0))}{s^{2C}} \le \frac{\mm(B_{t}(x_0))}{t^{2C}},
\]
which is the desired result.

\medskip
Next, we are in a position to prove the claim. To this end, we introduce, for any $k\in \N$,
\[
S_k(\lambda) := (-1)^k P^{(k)}(\lambda) = \int_X (2 \d_{x_0}^2)^k e^{-2 \lambda \d_{x_0}^2} \, \d \mm \ge 0.
\]
For each $\lambda>0$, consider the sequence of functions $u_{\lambda,l} : X \rightarrow \R$, $l\in \N$, defined by
\begin{equation*}
	u_{\lambda,l}(x):=\max\big\{0,\min\{0,l-\d_{x_0}(x)\}+1\big\}e^{-\lambda \d_{x_0}^2(x)},
\end{equation*}
which belongs to $\mathrm{Lip}_b(X, \d)$. Set
\begin{equation*}
	u_\lambda(x):=\lim_{l\rightarrow {+\infty}} u_{\lambda,l}(x)=e^{-\lambda \d_{x_0}^2(x)}.
\end{equation*}
Using an approximation argument and the dominated convergence theorem, we readily see that $u_\lambda$ still satisfies \eqref{sequHUP2}. Inserting the function $u_\lambda = e^{-\lambda \d_{x_0}^2}$ into \eqref{sequHUP2}, and noting that
\[
|\text{lip}\, u_\lambda| = \big|2 \lambda \d_{x_0} e^{-\lambda \d_{x_0}^2} \,  \text{lip}\, \d_{x_0} \big|
\le 2 \lambda \d_{x_0} e^{-\lambda \d_{x_0}^2},
\]
we obtain
\begin{align}\label{have}
    \lambda S_{k + 1} (\lambda) \ge (C + k ) S_k(\lambda),  \qquad \forall\, k \in \N.
\end{align}
By the definition of $R$, we have
\[
R(\lambda)= -P'(\lambda)-C\frac{P(\lambda)}{\lambda},
\]
and a direct calculation yields
\begin{align}\label{desired}
(-1)^k R^{(k)}(\lambda) = S_{k + 1}(\lambda) - C \sum_{i = 0}^k \frac{k!}{i!}\frac{S_i(\lambda)}{\lambda^{k - i + 1}} .
\end{align}

To prove that $(-1)^k R^{(k)} \ge 0$, we argue by induction. For $k = 0$, \eqref{desired} reduces to \eqref{have}, so the claim holds. Now assume it holds for $k-1$ ($k\geq 1$), namely
\begin{align}\label{again}
   S_{k }(\lambda) \geq C \sum_{i = 0}^{k-1} \frac{(k-1)!}{i!}\frac{S_i(\lambda)}{\lambda^{k - i}},
\end{align}
Then, for $k$, combining \eqref{have} and \eqref{again}, we obtain
\begin{align*}
 S_{k + 1} (\lambda) \ge \frac{C}{\lambda} S_{k}(\lambda) +  \frac{k}{\lambda} S_k(\lambda) 
 \ge \frac{C}{\lambda} S_k(\lambda)  + \frac{k}{\lambda} \, C \sum_{i = 0}^{k - 1} \frac{(k-1)!}{i!}\frac{S_i(\lambda)}{\lambda^{k - i }}  
 = C \sum_{i = 0}^k \frac{k!}{i!}\frac{S_i(\lambda)}{\lambda^{k - i + 1}}.
\end{align*}
This completes the proof.
\end{proof}

\begin{remark}
In the proof above, $\d f_{x_0}$ denotes the measure induced by the right-continuous function $f_{x_0}$ (extended by $0$ to the whole of $\R$). This is the reason why we choose closed balls instead of open ones. We refer the reader to \cite[Section 1.5]{F1999} for details. Furthermore, for the integration by parts used in the proof, we refer the reader to \cite[Theorem 3.36]{F1999}.
\end{remark}

\begin{remark}
If \ref{first} holds only for a single $k \in \N$, then inserting functions of the form $\d_{x_0}^\beta e^{-\lambda \d_{x_0}^2}$ (after a standard approximation by truncation) yields some estimates, but these are not strong enough to obtain the conclusion of Theorem \ref{Theorem1}.
\end{remark}

To end this subsection, we also provide a sketch of the proof of Theorem \ref{Theorem1p}.

\begin{proof}[Sketch of the proof of Theorem \ref{Theorem1p}]
The proof is almost identical to that of Theorem \ref{Theorem1}. We can again reduce to the case $\ell = 0$. To treat the case $\ell = 0$, we define the new auxiliary function
\[
    P(\lambda ) := \int_X e^{-p \lambda \d_{x_0}^{p'}} \, \d \mm
    = \lambda \int_0^{+\infty} \mm\!\left(\overline{B}_{\left(\frac{\tau}{p}\right)^{1/p'}}(x_0)\right) e^{-\lambda \tau} \, \d \tau
\]
and the function
\[
f_{x_0}(\tau) := \mm\!\left(\overline{B}_{\left(\frac{\tau}{p}\right)^{1/p'}}(x_0)\right),
\]
instead of the original $P$ and $f_{x_0}$ in the proof of Theorem \ref{Theorem1}. The functions $Q$, $R$, and $S_k$ are modified accordingly using this choice of $P$ and $f_{x_0}$. As before, it remains to show that $R$ is completely monotonic. To this end, we insert the function (after a standard approximation by truncation) $u_\lambda := e^{-\lambda \d_{x_0}^{p'}}$ into \eqref{firstp}; the resulting estimate is exactly \eqref{have}. The argument then proceeds as above.
\end{proof}

\subsection{Proof of Corollary \ref{prop1}}\label{3.2}
\begin{proof}
Fix $x_0\in X$ and $r_0>0$. From \ref{VD} we know 
\[
\mm(B_{R}(x_0)) \le DR^\gamma\frac{\mm(B_{r_0}(x_0))}{r_0^\gamma}, \qquad \forall \, r_0<R.
\]
Then, by the layer cake representation and a change of variables, we have
\begin{align*}
\int_X e^{-2 \lambda \d_{x_0}^2} \, \d\mm
=&\int_{0}^{\infty} \mm\left(\left\{x \in X: e^{-2\lambda \d_{x_0}^2} > t\right\}\right)\d t \\
=& 4\lambda \int_{0}^{\infty} \mm\left(B_{\rho}(x_0)\right)\rho e^{-2\lambda \rho^2}\, \d\rho\\
\leq&\ 4\lambda \int_{0}^{r_0} \mm\left(B_{\rho}(x_0)\right)\rho e^{-2\lambda \rho^2}\, \d\rho
 + 4\lambda \int_{r_0}^{\infty} D\frac{\mm(B_{r_0}(x_0))}{r_0^\gamma}\rho^{\gamma+1} e^{-2\lambda \rho^2}\, \d\rho
 < +\infty.
\end{align*}
Thus, by Theorem \ref{Theorem1}, we obtain
\begin{equation}\label{RBG}
  \frac{\mm(B_{r}(x_0))}{r^{2C}}\leq \frac{\mm(B_{R}(x_0))}{R^{2C}}, \qquad \forall \, 0<r<R.
\end{equation}

We now prove the first assertion of Corollary \ref{prop1}. It follows from \ref{AR} that we can find a sequence $\{r_j\}_{j \in \N}$ such that $r_j \to 0^+$ and
\begin{align}\label{limit}
    \frac{1}{2}c_* r_j^\beta \le \mm(B_{r_j}(x_0)) \le 2c_* r_j^\beta, \qquad \forall \, j \in \N.
\end{align}
Note that \ref{VD} and \eqref{RBG} imply that
\[
D^{-1} r^\gamma \mm(B_{1}(x_0)) \le \mm(B_{r}(x_0)) \le r^{2C} \mm(B_{1}(x_0)), \qquad \forall \,r<1.
\]
Taking $r = r_j$ and combining \eqref{limit}, we obtain $2C \le \beta \le \gamma$.

We are now in a position to prove \eqref{loweruni}. We divide the proof into two cases. First, consider $x = x_0$. By \ref{AR}, there exists $\delta_0 > 0$ such that,
\[
\frac{\mm(B_{r}(x_0))}{r^\beta} \ge \frac{c_*}{2}, \qquad \forall \, r \in (0,\delta_0].
\]
If $r > \delta_0$, then by \eqref{RBG} we have
\[
\frac{\mm(B_{r}(x_0))}{r^\beta}
= \frac{\mm(B_{r}(x_0))}{r^{2C}}\, r^{2C - \beta}
\ge \frac{\mm(B_{\delta_0}(x_0))}{\delta_0^{2C}}\, r^{2C - \beta}.
\]
Therefore, there exists a constant $c(\delta_0,c_*)$ such that
\begin{align}\label{lowerx0}
    \frac{\mm(B_{r}(x_0))}{r^\beta} \ge c(\delta_0,c_*) \frac{1}{(1 + r)^{\beta - 2C}}, \qquad \forall \, r >0,
\end{align}
which is exactly \eqref{loweruni} in the case $x=x_0$.

We now turn to the case $x \ne x_0$. Fix $r>0$ and let $\rho > \max\{r,\d(x_0,x)\}$ be a number whose value will be determined later. Then \ref{VD} yields
\begin{align*}
  \frac{\mm(B_{r}(x))}{r^\beta}
  &= \frac{\mm(B_{r}(x))}{r^\gamma} r^{\gamma - \beta}
  \ge D^{-1} \frac{\mm(B_{\rho}(x))}{\rho^\gamma} r^{\gamma - \beta} \\
  &\ge D^{-1} \frac{\mm(B_{\rho - \d(x_0,x)}(x_0))}{\rho^\gamma} r^{\gamma - \beta} \\
  &= D^{-1} \frac{\mm(B_{\rho - \d(x_0,x)}(x_0))}{(\rho - \d(x_0,x))^\beta}
     \cdot\frac{(\rho - \d(x_0,x))^\beta}{\rho^\gamma} r^{\gamma - \beta} \\
  &\ge D^{-1} c(\delta_0,c_*)
     \frac{1}{(1 + \rho - \d(x_0,x))^{\beta - 2C}}\cdot
     \frac{(\rho - \d(x_0,x))^\beta}{\rho^\gamma} r^{\gamma - \beta},
\end{align*}
where in the second inequality we used the inclusion
$B_{\rho - \d(x_0,x)}(x_0) \subset B_{\rho}(x)$, and in the last inequality we used \eqref{lowerx0}. Finally, choosing $\rho = 2 \d(x_0,x) + r$ yields \eqref{loweruni}.
\end{proof}

\begin{remark}
Our choice of $\rho$ is optimal (in terms of the orders of $\d(x_0,x)$ and $r$) in the sense that there exists a constant $C_{**}$, independent of $x$ and $r$, such that
\[
\sup_{\rho > \max\{r,\d(x_0,x)\}} \frac{1}{(1 + \rho - \d(x_0,x))^{\beta -  {2C}}}\cdot \frac{(\rho - \d(x_0,x))^\beta}{\rho^\gamma} r^{\gamma - \beta}
\]
can be bounded from above by
\[
C_{**} \frac{1}{(1 + r + \d(x_0,x))^{\beta -  {2C}}} \cdot\frac{1}{\left( 1 + \frac{\d(x_0,x)}{r}\right)^{\gamma - \beta}}.
\]
\end{remark}

\subsection{Proof of Theorem \ref{Theorem2}}\label{3.3}
\begin{proof}
The proof is similar to \cite[Theorem 1.1]{HX2024}, so we adopt the same notation as in its proof and omit some details. First, the implication ${\bf (b) \Rightarrow (a)}$ is trivial.

\medskip

We now prove ${\bf (c) \Rightarrow (b)}$. Assume that the vertex of the $N$-volume cone is $x_0$. By a needle decomposition argument similar to Step~1 in \cite{HX2024}, we have
\begin{equation}\label{equation3-1}
\mm=\int_Q \mm_\alpha\, \d\mathfrak{q}(\alpha),\qquad (X_\alpha,\d,\mm_\alpha)\cong ([0, +\infty),|\cdot|,c_{\alpha}x^{N-1}\mathrm{d}x),
\end{equation}
where, for $\mathfrak{q}$-a.e.\ $\alpha \in Q$, $X_\alpha$ has $x_0$ as an extremal point. Fix $\alpha \in Q$ and denote by $\tilde{\Delta}_\alpha$ the weighted Laplacian
\[
\tilde{\Delta}_\alpha:=\Delta -\langle \nabla V_\alpha,\nabla \cdot\rangle,
\]
where $V_\alpha(x)$ is given by $e^{-V_\alpha(x)}=c_\alpha x^{N-1}$. Fix $u\in \mathrm{Lip}_b(X, \d)\setminus\{0\}$ and $k\in \N$. For $\d_0(x):=\d_{x_0}(x)=x$ we have
\[
\tilde{\Delta}_\alpha(\d_0^{2k+2})=(2k+2)(N+2k)\d_0^{2k},
\]
and therefore, by the Cauchy--Schwarz inequality,
\begin{equation*}
\begin{aligned}
&(2k+2)^2(N+2k)^2\left(\int_{X_\alpha}  \d_0^{2k}|u|^2 \, \d \mm_\alpha\right)^2\\
    &=\left(\int_{X_\alpha} \tilde{\Delta}_\alpha(\d_0^{2k+2}) |u|^2 \, \d \mm_\alpha\right)^2\\
    &=\left(-\int_{X_\alpha} \left<\nabla (|u|^2),\nabla (\d_0^{2k+2})\right>  \, \d \mm_\alpha\right)^2\\
&\leq 4(2k+2)^2\left(\int_{X_\alpha} \d_0^{2k}|\nabla u|^2\, \d \mm_\alpha\right)\left(\int_{X_\alpha} \d_0^{2k+2} |u|^2\, \d \mm_\alpha\right),
\end{aligned}
\end{equation*}
which implies
\begin{equation*}
\left(\int_{X_\alpha} \d_0^{2k}|\nabla u|^2\, \d \mm_\alpha\right)\left(\int_{X_\alpha} \d_0^{2k+2} |u|^2\, \d \mm_\alpha\right)
\geq\left(\frac{N}{2}+k\right)^2\left(\int_{X_\alpha}  \d_0^{2k}|u|^2 \, \d \mm_\alpha\right)^2.
\end{equation*}
Similar to Step~3 in \cite{HX2024}, we can adjust the decomposition and obtain
\begin{equation}
\left(\int_X \d^{2k}_{x_0}|\text{lip}\,u\,|^2\, \d\mm\right)
\left(\int_X \d_{x_0}^{2k+2} |u|^2\, \d\mm\right)
\geq\left(\frac{N}{2}+k\right)^2\left(\int_X \d^{2k}_{x_0}|u|^2\, \d\mm\right)^2,
\end{equation}
which is \ref{first} with vertex $x_0$ and $C=\frac{N}{2}$.
  
\medskip
For the sharpness of $\left(\frac{N}{2}+k\right)^2$, for each $\lambda>0$ consider the sequence of functions $u_{\lambda,l} : X \rightarrow \R$, $l\in \N$, defined by
\begin{equation*}
	u_{\lambda,l}(x):=\max\big\{0,\min\{0,l-\d_{x_0}(x)\}+1\big\} e^{-\lambda \d_{x_0}^2(x)},
\end{equation*}
which belongs to $\mathrm{Lip}_b(X, \d)$. Set
\begin{equation*}
	u_\lambda(x):=\lim_{l\rightarrow {+\infty}} u_{\lambda,l}(x)= e^{-\lambda \d_{x_0}^2(x)},
\end{equation*}
and, using an approximation argument similar to Step~4 in \cite{HX2024}, we complete the proof.

\medskip

We are left with the implication ${\bf (a) \Rightarrow (c)}$.
By Theorem \ref{theorem4} we know that $\rho \mapsto \frac{\mm(B_{\rho}(x_0))}{\rho^N}$ is non-increasing on $(0, {+\infty})$. Without loss of generality, we assume
\begin{equation}\label{equation3-12}
    \lim_{\rho \rightarrow 0^+} \frac{\mm(B_{\rho}(x_0))}{\omega_N \rho^{N}}=A,\quad A>0~~\text{is a finite constant},
\end{equation}
where $\omega_N:=\pi^{\frac{N}{2}}/\Gamma(\frac{N}{2}+1)$. Then we have
\begin{equation}\label{equation3-13}
	\mm(B_{\rho}(x_0))\leq A \omega_N \rho^{N},\qquad \forall \rho >0.
\end{equation}

Now we recall that the measure $\mm^k$ is given by the formula  $\d \mm^{k} := \d_{x_0}^{2k} \, \d \mm$. We claim that, for the measure $\mm^k$, we also have the analogous limit
\begin{equation}\label{equation3-12p}
    \lim_{\rho \rightarrow 0^+} \frac{\mm^{k}(B_{\rho}(x_0))}{\omega_N \rho^{N+2k}}=\frac{AN}{N+2k}
\end{equation}
and analogous upper bound
\begin{equation}\label{equation3-13p}
	\mm^{k}(B_{\rho}(x_0))\leq \frac{AN}{N+2k} \omega_N \rho^{N + 2k},\qquad \forall \rho >0.
\end{equation}
First, we show \eqref{equation3-13p}. Without loss of generality, we may assume $k \ge 1$. We shall use the following formula, which follows from Fubini's theorem:
\begin{align}\nonumber
    \mm^{k}(B_{\rho}(x_0)) &= \int_{B_{\rho}(x_0)} \d_{x_0}^{2k} \, \d \mm
    = 2k \int_{B_{\rho}(x_0)} \int_0^{\d_{x_0}(x)} t^{2k - 1} \, \d t \, \d \mm \\
\label{ecal}
&= 2k \int_0^\rho \bigl[\mm(B_{\rho}(x_0)) - \mm(B_{t}(x_0))\bigr] t^{2k - 1} \, \d t.
\end{align}
As a result, together with Theorem \ref{theorem4} and \eqref{equation3-13}, we obtain
\begin{equation}\label{small}
      \mm^{k}(B_{\rho}(x_0))
    \le 2k \mm(B_{\rho}(x_0)) \int_0^\rho \left( 1 - \frac{t^N}{\rho^N} \right) t^{2k - 1} \, \d t 
    \le \frac{AN}{N+2k} \omega_N \rho^{N+2k},
\end{equation}
which is exactly \eqref{equation3-13p}. On the other hand, it follows from \eqref{equation3-12} and \eqref{equation3-13} that for any $\varepsilon > 0$ there exists $\rho_\varepsilon>0$ such that
\[
(A- \varepsilon) \omega_N \rho^N \le \mm(B_{\rho}(x_0)) \le A \omega_N \rho^N, \qquad \forall \, \rho \in (0,\rho_\varepsilon].
\]
Then, for $\rho \in (0,\rho_\varepsilon]$ it follows from \eqref{ecal} that
\begin{align*}
    \mm^{k}(B_{\rho}(x_0))
    \ge 2k \int_0^\rho \bigl[(A- \varepsilon) \omega_N \rho^N - A \omega_N t^N\bigr] t^{2k - 1} \, \d t 
    = \left(\frac{AN}{N+2k} - \varepsilon \right)\omega_N \rho^{N + 2k}.
\end{align*}
Combining this with \eqref{small}, we obtain \eqref{equation3-12p}. With \eqref{equation3-12p} and \eqref{equation3-13p} in hand, using an argument similar to Step~5 in \cite{HX2024}, we can prove that $(X,\d,\mm^{k})$ is an $(N + 2k)$-volume cone. More precisely, we have
\[
\mm^{k}(B_{\rho}(x_0))=\frac{AN}{ N+2k} \omega_N \rho^{N + 2k}, \qquad \forall \rho>0.
\]
To prove that $(X,\d,\mm)$ is an $N$-volume cone, we use Fubini's theorem again:
\begin{align*}
    \mm(B_\rho(x_0))
    &=\int_{B_\rho(x_0)} \d_{x_0}^{-2k} \, \d \mm^{k} \\
    &=2k \int_{B_\rho(x_0)} \int_{\d_{x_0}(x)}^{+\infty} t^{-2k - 1} \, \d t \, \d \mm^{k} \\
    &=2k \int_0^{+\infty} \mm^{k} (B_{\min\{t,\rho\}}(x_0)) t^{-2k - 1} \, \d t \\
    &=\frac{2kAN}{N+2k} \omega_N \left( \int_0^\rho t^{N -1} \, \d t + \int_\rho^{+\infty} \rho^{N + 2k} t^{-2k -1} \, \d t \right)
    = A \omega_N \rho^N.
\end{align*}
Here, we can write \(\d \mm=\d_{x_0}^{-2k}\,\d \mm^k\) since \(\mm\) is non-atomic (due to MCP spaces) and thus $\mm(\{x_0\}) = 0$. Thus, $(X,\d, \mm)$ is an $N$-volume cone, and this completes the proof of Theorem \ref{Theorem2}.
\end{proof}

\subsection{Proof of Proposition \ref{stability}}\label{3.4}

\begin{proof}
We still use the needle decomposition method as in the proof of Theorem \ref{Theorem2}, and we obtain
\begin{equation*}
\mm=\int_Q \mm_\alpha\, \d\mathfrak{q}(\alpha),\qquad (X_\alpha,\d,\mm_\alpha)\cong ([0, +\infty),|\cdot|,c_{\alpha}x^{N-1}\mathrm{d}x),
\end{equation*}
where, for $\mathfrak{q}$-a.e.\ $\alpha \in Q$, $X_\alpha$ has $x_0$ as an extremal point.

Fix $u:=u_0(\d_{x_0})\in S^r_{x_0}$. Now, using \cite[Theorem 1.8]{LLR2026} (see also its proof), we know that
\begin{align*}
    &\left( \int_0^{+\infty} |u_0'|^2(x) x^{N + 2k - 1} \, \d x \right)^{\frac{1}{2}}\left( \int_0^{+\infty} |u_0|^2(x) x^{N + 2k + 1} \, \d x \right)^{\frac{1}{2}}
     \\
    &\quad - \left(\frac{N }{2} + k\right) \int_0^{+\infty} |u_0|^2(x) x^{N + 2k - 1} \, \d x
    \ge  \int_0^{+\infty} \left|u_0(x) - \eta \, e^{-\frac{x^2}{2\xi^2}} \right|^2 x^{N + 2k - 1} \, \d x,
\end{align*}
where
\begin{align*}
    \xi &:=  \xi(u_0) = \left( \frac{\int_0^{+\infty} |u_0|^2(x) x^{N + 2k + 1} \, \d x}{\int_0^{+\infty} |u_0'|^2(x) x^{N + 2k - 1} \, \d x} \right)^{\frac{1}{4}}, \\
    \eta &:=  \eta(u_0) = \frac{\int_0^{+\infty} u_0(x) x^{N + 2k - 1} e^{-\frac{x^2}{2\xi^2}}  \, \d x}{\int_0^{+\infty} x^{N + 2k - 1} e^{-\frac{x^2}{\xi^2}}  \, \d x},
\end{align*}
which are two constants independent of $\alpha$. Multiplying both sides by $c_\alpha$ and rewriting the inequality on $X_\alpha$ yield that for $\mathfrak{q}$-a.e.\ $\alpha \in Q$,
\begin{align*}
    &\left( \int_{X_\alpha} |u_0'|^2(\d_0) \d_0^{2k } \, \d \mm_\alpha \right)^{\frac{1}{2}}\left( \int_{X_\alpha} |u_0|^2(\d_0) \d_0^{2k + 2} \, \d \mm_\alpha \right)^{\frac{1}{2}}\\
    &\quad - \left(\frac{N }{2} + k\right) \int_{X_\alpha} |u_0|^2(\d_0) \d_0^{2k } \, \d \mm_\alpha
    \ge  \int_{X_\alpha} \left|u_0(\d_0) - \eta \, e^{-\frac{\d_0^2}{2\xi^2}} \right|^2 \d_0^{2k } \, \d \mm_\alpha.
\end{align*}
 Finally, integrating with respect to $\alpha$, applying the Cauchy--Schwarz inequality, and using the fact that $|u_0'|(\d_{x_0}) \le |\mathrm{lip}\, u|$, we obtain 
\begin{align*}
 &\left( \int_{X} |\mathrm{lip}\, u|^2 \d_{x_0}^{2k } \, \d \mm \right)^{\frac{1}{2}}\left( \int_{X} |u|^2 \d_{x_0}^{2k + 2} \, \d \mm \right)^{\frac{1}{2}}
    - \left(\frac{N }{2} + k\right) \int_{X} |u|^2 \d_{x_0}^{2k } \, \d \mm\\
    \geq&\left( \int_{X} |u_0'|^2(\d_{x_0}) \d_{x_0}^{2k } \, \d \mm \right)^{\frac{1}{2}}\left( \int_{X} |u_0|^2(\d_{x_0}) \d_{x_0}^{2k + 2} \, \d \mm \right)^{\frac{1}{2}}
    - \left(\frac{N }{2} + k\right) \int_{X} |u_0|^2(\d_{x_0}) \d_{x_0}^{2k } \, \d \mm\\
    \ge&  \int_{X} \left|u_0(\d_{x_0}) - \eta \, e^{-\frac{\d_{x_0}^2}{2\xi^2}} \right|^2 \d_{x_0}^{2k } \, \d \mm \geq\inf_{v \in E_{G,x_0}} \left\{ \int_X |u - v|^2 \d_{x_0}^{2k} \, \d \mm \right\}.
\end{align*}
So we finish the proof of Proposition \ref{stability}.
\end{proof}

\subsection{Counterexample}\label{3.5}
\begin{example}
Let $n\ge 1$. Set
\[
X=\mathbb R^n,\qquad \d(x,y)=|x-y|,\qquad x_0=0 \qquad (\mathrm{i.e.} \,\,\d_0(x) = |x|)
\]
and define a Radon measure on $X$ by
\[
\mm := \mathcal L^n + M\delta_{0},\qquad M>0,
\]
where $\mathcal L^n$ denotes the $n$-dimensional Lebesgue measure and $\delta_0$ the Dirac mass at $0$.
Then, for the choice of the constant $C = \frac{n}{2}$, \ref{first} holds for any $k\ge 1$ and any $u\in \mathrm{Lip}_b(X,\d)$, but it fails for $k=0$.
Moreover, the function $\rho \mapsto \frac{\mm(B_\rho(x_0))}{\rho^{2C}}$ cannot be non-decreasing on $(0,+\infty)$.
\end{example}

\begin{proof}
It is not hard to see that $(X,\d)$ is Polish, and $0<\mm(U)<+\infty$ for every non-empty bounded open set $U\subset\mathbb R^n$.
First, for every $\lambda>0$,
\[
\int_X e^{-2\lambda \d^2_0}\,\d \mm
= \int_{\mathbb R^n} e^{-2\lambda |x|^2}\,\d x + M <+\infty.
\]

Next, we check that \ref{first} holds for any $k\ge 1$ with $C=\frac n2$ and $x_0=0$.
Recall that $(\mathrm{CKN})_{C,0,k}$ reads: for any $u\in {\rm Lip}_b(X,\d)$,
\begin{equation}\label{eq:CKN}
\Big(\int_X \d_{0}^{2k}\,|{\rm lip}\,u|^2\,\d\mm\Big)
\Big(\int_X \d_{0}^{2k+2}\,|u|^2\,\d\mm\Big)
\ge (C+k)^2\Big(\int_X \d_{0}^{2k}\,|u|^2\,\d\mm\Big)^2.
\end{equation}
Fix $k\ge 1$. Since $\d^{2k}_0(0)=\d^{2k+2}_0(0)=0$, the contributions of the Dirac mass vanish, and \eqref{eq:CKN} is equivalent to the following: for any $u\in {\rm Lip}_b(\R^n)$, it holds that
\begin{equation*}
\Big(\int_{\R^n }|x|^{2k}\,|\nabla u|^2\,\d x\Big)
\Big(\int_{\R^n } |x|^{2k+2}\,|u|^2\,\d x\Big)
\ge \left(\frac{n}{2}+k\right)^2\Big(\int_{\R^n } |x|^{2k}\,|u|^2\,\d x\Big)^2,
\end{equation*}
which is the classical $L^2$-CKN inequality on $\R^n$ (see \eqref{Euclidean}); it follows, for instance, by approximation with functions in $C_0^{\infty}(\R^n)$.

However, for $k=0$, \eqref{eq:CKN} becomes
\begin{equation}\label{eq:CKN0}
\Big(\int_{\R^n } |\nabla u|^2\,\d\mm\Big)
\Big(\int_{\R^n } |x|^{2}\,|u|^2\,\d\mm\Big)
\geq C^2\Big(\int_{\R^n } |u|^2\,\d\mm\Big)^2,
\qquad \forall \, u\in {\rm Lip}_b(\R^n ).
\end{equation}
We claim that \eqref{eq:CKN0} fails for every $C>0$.
Choose a standard radial cut-off $u_\varepsilon\in{\rm Lip}_b(\mathbb R^n)$ such that, for any $\varepsilon\in(0,1)$,
\[
u_\varepsilon\equiv 1 \ \text{on }B_\varepsilon(0),\qquad
u_\varepsilon\equiv 0 \ \text{on }\mathbb R^n\setminus B_{2\varepsilon}(0),\qquad
|\nabla u_\varepsilon|\le \frac{2}{\varepsilon}.
\]
Then $u_\varepsilon(0)=1$, hence
\begin{equation}\label{right}
\int_{\R^n }|u_\varepsilon|^2\,\d \mm
\ge \int_{\R^n } |u_\varepsilon|^2\,\d(M\delta_0)
= M
\quad\Rightarrow\quad
C^2\Big(\int_{\R^n }|u_\varepsilon|^2\,\d \mm\Big)^2\ge C^2M^2.
\end{equation}
On the other hand, since $\mm=\mathcal L^n+M\delta_0$ and ${\rm supp}\,\nabla u_\varepsilon\subset  B_{2\varepsilon}(0)\setminus B_\varepsilon(0)$, we have
\begin{align}
\int_{\R^n } |\nabla u_{\varepsilon}|^2\,\d\mm
=\int_{B_{2\varepsilon}(0)\setminus B_\varepsilon(0)}|\nabla u_\varepsilon |^2\,\d x
\le \frac{4}{\varepsilon^2}\,\mathcal L^n(B_{2\varepsilon}(0))
= c_1\,\varepsilon^{n-2},
\label{eq:left1}
\end{align}
and
\begin{align}
\int_{\R^n} |x|^2 |u_\varepsilon|^2\,\d \mm
&=\int_{\mathbb R^n}|x|^2|u_\varepsilon|^2\,\d x
\le \int_{B_{2\varepsilon}(0)}|x|^2\,\d x
= c_2\,\varepsilon^{n+2},
\label{eq:left2}
\end{align}
for some constants $c_1,c_2>0$ depending only on $n$.
Multiplying \eqref{eq:left1} and \eqref{eq:left2} yields
\begin{equation}\label{eq:left}
\Big(\int_{\R^n } |\nabla u_{\varepsilon}|^2\,\d\mm\Big)
\Big(\int_{\R^n} |x|^2 |u_\varepsilon|^2\,\d \mm\Big)
\le c_1c_2\,\varepsilon^{(n-2)+(n+2)}
= c_1c_2\,\varepsilon^{2n}\xrightarrow[\varepsilon\to0]{}0,
\end{equation}
which contradicts \eqref{right}. Hence $(\mathrm{CKN})_{C, 0,0}$ fails for every $C>0$ on $(\mathbb R^n,|\cdot|,\mm)$.

Finally, we show that the monotonicity conclusion does not hold.
Indeed, for any $\rho>0$,
\[
\mm(B_\rho(0))
=\mathcal L^n(B_\rho(0))+M\delta_0(B_\rho(0))
=\omega_n\rho^n+M.
\]
Taking any $0< C \le \frac n2$, we obtain
\[
\frac{\mm(B_\rho(0))}{\rho^{2C}}
=\frac{\omega_n\rho^n+M}{\rho^{2C}}
=\omega_n \rho^{n - 2C}+\frac{M}{\rho^{2C}},\qquad \forall\,\rho>0,
\]
which is clearly not non-decreasing on $(0,+\infty)$.
\end{proof}

\section*{Acknowledgement}
\setcounter{equation}{0}

The first author is supported by the Young Scientist Programs of the Ministry of Science \& Technology of China (2021YFA1000900, 2021YFA1002200), the Shandong Provincial Natural Science Foundation (ZR2025QB05), and a China Scholarship Council grant (No.\allowbreak 202406340143).
The second author is supported by funding from the European Research Council (ERC) under the European Union's Horizon 2020 research and innovation programme (grant agreement GEOSUB, No.\allowbreak 945655) and the JSPS Grant-in-Aid for Early-Career Scientists (No.\allowbreak 24K16928).

\bibliography{bb} 

@article {LLR2026,
    AUTHOR = {Lam, Nguyen and Lu, Guozhen and Russanov, Andrey},
     TITLE = {Stability of {G}aussian {P}oincar\'e{} inequalities and
              {H}eisenberg uncertainty principle with monomial weights},
   JOURNAL = {Math. Z.},
  FJOURNAL = {Mathematische Zeitschrift},
    VOLUME = {312},
      YEAR = {2026},
    NUMBER = {2},
     PAGES = {Paper No. 42},
      ISSN = {0025-5874,1432-1823},
   MRCLASS = {26D10 (46E35 81Q05)},
  MRNUMBER = {5013418},
       DOI = {10.1007/s00209-025-03920-6},
       URL = {https://doi.org/10.1007/s00209-025-03920-6},
}

@book {F1999,
    AUTHOR = {Folland, Gerald B.},
     TITLE = {Real analysis},
    SERIES = {Pure and Applied Mathematics (New York)},
   EDITION = {Second},
      NOTE = {Modern techniques and their applications,
              A Wiley-Interscience Publication},
 PUBLISHER = {John Wiley \& Sons, Inc., New York},
      YEAR = {1999},
     PAGES = {xvi+386},
      ISBN = {0-471-31716-0},
   MRCLASS = {00A05 (26-01 28-01 46-01)},
  MRNUMBER = {1681462},
}

@book{SSV2012,
  title={Bernstein functions: theory and applications},
  author={Schilling, Ren{\'e} L and Song, Renming and Vondracek, Zoran},
  volume={37},
  year={2012},
  publisher={Walter de Gruyter}
}

@book {W1941,
    AUTHOR = {Widder, David Vernon},
     TITLE = {The {L}aplace {T}ransform},
    SERIES = {Princeton Mathematical Series},
    VOLUME = {vol. 6},
 PUBLISHER = {Princeton University Press, Princeton, NJ},
      YEAR = {1941},
     PAGES = {x+406},
   MRCLASS = {42.4X},
  MRNUMBER = {5923},
MRREVIEWER = {J.\ D.\ Tamarkin},
}

@article {CKN1984,
    AUTHOR = {Caffarelli, L. and Kohn, R. and Nirenberg, L.},
     TITLE = {First order interpolation inequalities with weights},
   JOURNAL = {Compos. Math.},
  FJOURNAL = {Compositio Mathematica},
    VOLUME = {53},
      YEAR = {1984},
    NUMBER = {3},
     PAGES = {259--275},
      ISSN = {0010-437X,1570-5846},
   MRCLASS = {46E35 (26D10)},
  MRNUMBER = {768824},
MRREVIEWER = {H.\ Triebel},
       URL = {http://www.numdam.org/item?id=CM_1984__53_3_259_0},
}

@article{CW2001,
  title={On the {C}affarelli-{K}ohn-{N}irenberg inequalities: sharp constants, existence (and nonexistence), and symmetry of extremal functions},
  author={Catrina, Florin and Wang, Zhi-Qiang},
  JOURNAL = {Commun. Pure Appl. Math.},
  FJOURNAL = {Communications on Pure and Applied Mathematics},
  volume={54},
  number={2},
  pages={229--258},
  year={2001},
  publisher={Wiley Online Library}
}

@article{C2008,
  title={Some new and short proofs for a class of {C}affarelli--{K}ohn--{N}irenberg type inequalities},
  author={Costa, David G},
  JOURNAL = {J. Math. Anal. Appl.},
  FJOURNAL = {Journal of Mathematical Analysis and Applications},
  volume={337},
  number={1},
  pages={311--317},
  year={2008},
  publisher={Elsevier}
}

@article{CC2009,
  title={Sharp weighted-norm inequalities for functions with compact support in $\mathbb{R}^N$$\setminus$$\{$0$\}$},
  author={Catrina, Florin and Costa, David G},
  JOURNAL = {J. Differential Equations},
  FJOURNAL = {Journal of Differential Equations},
  volume={246},
  number={1},
  pages={164--182},
  year={2009},
  publisher={Elsevier}
}

@article{CFL2021,
  title={Short proofs of refined sharp {C}affarelli-{K}ohn-{N}irenberg inequalities},
  author={Cazacu, Cristian and Flynn, Joshua and Lam, Nguyen},
  JOURNAL = {J. Differential Equations},
  FJOURNAL = {Journal of Differential Equations},
  volume={302},
  pages={533--549},
  year={2021},
  publisher={Elsevier}
}

@article{CKN1982,
  title={Partial regularity of suitable weak solutions of the {N}avier-{S}tokes equations},
  author={Caffarelli, Luis and Kohn, Robert and Nirenberg, Louis},
  JOURNAL = {Commun. Pure Appl. Math.},
  FJOURNAL = {Communications on Pure and Applied Mathematics},
  volume={35},
  number={6},
  pages={771--831},
  year={1982},
  publisher={Wiley Online Library}
}

@article{DX2004,
  title={Complete manifolds with non-negative {R}icci curvature and the {C}affarelli--{K}ohn--{N}irenberg inequalities},
  author={Do Carmo, Manfredo Perdig{\~a}o and Xia, Changyu},
  JOURNAL = {Compos. Math.},
  FJOURNAL = {Compositio Mathematica},
  volume={140},
  number={3},
  pages={818--826},
  year={2004},
  publisher={London Mathematical Society}
}

@article{X2007,
  title={The {C}affarelli-{K}ohn-{N}irenberg inequalities on complete manifolds},
  author={Xia, Changyu},
  JOURNAL = {Math. Res. Lett.},
  FJOURNAL = {Mathematical Research Letters},
  volume={14},
  number={5},
  pages={875--885},
  year={2007},
  publisher={International Press of Boston}
}

@article{B2010,
  title={A {C}affarelli--{K}ohn--{N}irenberg type inequality on {R}iemannian manifolds},
  author={Bozhkov, Yuri},
  JOURNAL = {Appl. Math. Lett.},
  FJOURNAL = {Applied Mathematics Letters},
  volume={23},
  number={10},
  pages={1166--1169},
  year={2010},
  publisher={Elsevier}
}

@article{M2015,
  title={The {C}affarelli--{K}ohn--{N}irenberg inequalities and manifolds with nonnegative weighted Ricci curvature},
  author={Mao, Jing},
  JOURNAL = {J. Math. Anal. Appl.},
  FJOURNAL = {Journal of Mathematical Analysis and Applications},
  volume={428},
  number={2},
  pages={866--881},
  year={2015},
  publisher={Elsevier}
}

@article{N2022,
  title={Sharp {C}affarelli--{K}ohn--{N}irenberg inequalities on {R}iemannian manifolds: the influence of curvature.},
  author={Nguyen, Van Hoang},
  JOURNAL = {Proc. Roy. Soc. Edinburgh Sect. A},
  FJOURNAL = {Proceedings of the Royal Society of Edinburgh. Section A. Mathematics},
  volume={152},
  number={1},
  year={2022}
}

@article {WW2020,
    AUTHOR = {Wei, Shihshu Walter and Wu, Bing Ye},
     TITLE = {Generalized {H}ardy type and {C}affarelli-{K}ohn-{N}irenberg
              type inequalities on {F}insler manifolds},
   JOURNAL = {Internat. J. Math.},
  FJOURNAL = {International Journal of Mathematics},
    VOLUME = {31},
      YEAR = {2020},
    NUMBER = {13},
     PAGES = {2050109, 27},
      ISSN = {0129-167X,1793-6519},
   MRCLASS = {53C60 (53B40)},
  MRNUMBER = {4192451},
MRREVIEWER = {Andrew\ Bucki},
       DOI = {10.1142/S0129167X20501098},
       URL = {https://doi.org/10.1142/S0129167X20501098},
}

@article{HKZ2020,
  title={Sharp uncertainty principles on general {F}insler manifolds},
  author={Huang, Libing and Krist{\'a}ly, Alexandru and Zhao, Wei},
  JOURNAL = {Trans. Amer. Math. Soc.},
  FJOURNAL = {Transactions of the American Mathematical Society},
  volume={373},
  number={11},
  pages={8127--8161},
  year={2020}
}

@article{KO2013,
  title={Caffarelli--{K}ohn--{N}irenberg inequality on metric measure spaces with applications},
  author={Krist{\'a}ly, Alexandru and Ohta, Shin-ichi},
  JOURNAL = {Math. Ann.},
  FJOURNAL = {Mathematische Annalen},
  volume={357},
  number={2},
  pages={711--726},
  year={2013},
  publisher={Springer}
}

@article{H2015,
  title={Weighted {C}affarelli-{K}ohn-{N}irenberg type inequality on the {H}eisenberg group},
  author={Han, Yazhou},
  JOURNAL = {Indian J. Pure Appl. Math.},
  FJOURNAL = {Indian Journal of Pure and Applied Mathematics},
  volume={46},
  number={2},
  pages={147--161},
  year={2015},
  publisher={Springer}
}

@article{ORS2019,
  title={${L}^p$-{C}affarelli--{K}ohn--{N}irenberg type inequalities on homogeneous groups},
  author={Ozawa, Tohru and Ruzhansky, Michael and Suragan, Durvudkhan},
  JOURNAL = {Q. J. Math.},
  FJOURNAL = {The Quarterly Journal of Mathematics},
  volume={70},
  number={1},
  pages={305--318},
  year={2019},
  publisher={Oxford University Press}
}

@article{RSY2017,
  title={Caffarelli--{K}ohn--{N}irenberg and {S}obolev type inequalities on stratified {L}ie groups},
  author={Ruzhansky, Michael and Suragan, Durvudkhan and Yessirkegenov, Nurgissa},
  JOURNAL = {Nonlinear Differential Equations Appl. NoDEA},
  FJOURNAL = {Nonlinear Differential Equations and Applications. NoDEA},
  volume={24},
  number={5},
  pages={56},
  year={2017},
  publisher={Springer}
}

@article{Y2018,
  title={Caffarelli--{K}ohn--{N}irenberg inequalities on {L}ie groups of polynomial growth},
  author={Yacoub, Chokri},
  JOURNAL = {Math. Nachr.},
  FJOURNAL = {Mathematische Nachrichten},
  volume={291},
  number={1},
  pages={204--214},
  year={2018},
  publisher={Wiley Online Library}
}

@article{F2020,
  title={Sharp {C}affarelli--{K}ohn--{N}irenberg-type inequalities on {C}arnot groups},
  author={Flynn, Joshua},
  JOURNAL = {Adv. Nonlinear Stud.},
  FJOURNAL = {Advanced Nonlinear Studies},
  volume={20},
  number={1},
  pages={95--111},
  year={2020},
  publisher={De Gruyter}
}

@article{TAX2021,
  title={The {C}affarelli--{K}ohn--{N}irenberg inequality on metric measure spaces},
  author={Tokura, Willian and Adriano, Levi and Xia, Changyu},
  JOURNAL = {Manuscripta Math.},
  FJOURNAL = {Manuscripta Mathematica},
  volume={165},
  number={1},
  pages={35--59},
  year={2021},
  publisher={Springer}
}

@article{HX2024,
  title={Sharp uncertainty principles on metric measure spaces},
  author={Han, Bang-Xian and Xu, Zhe-Feng},
  JOURNAL = {Calc. Var. Partial Differential Equations},
  FJOURNAL = {Calculus of Variations and Partial Differential Equations},
  volume={63},
  number={4},
  pages={104},
  year={2024},
  publisher={Springer}
}

@article{K2025,
  title={Volume growths versus {S}obolev inequalities},
  author={Krist{\'a}ly, Alexandru},
  JOURNAL = {Preprint arXiv:2501.16199},
  FJOURNAL = {arXiv e-prints},
  year={2025}
}

@article{WW2022,
  title={On the stability of the {C}affarelli--{K}ohn--{N}irenberg inequality},
  author={Wei, Juncheng and Wu, Yuanze},
  JOURNAL = {Math. Ann.},
  FJOURNAL = {Mathematische Annalen},
  volume={384},
  number={3},
  pages={1509--1546},
  year={2022},
  publisher={Springer}
}

@article{CFLL2024,
  title={Caffarelli--{K}ohn--{N}irenberg identities, inequalities and their stabilities},
  author={Cazacu, Cristian and Flynn, Joshua and Lam, Nguyen and Lu, Guozhen},
  JOURNAL = {J. Math. Pures Appl.},
  FJOURNAL = {Journal de Math{\'e}matiques Pures et Appliqu{\'e}es},
  volume={182},
  pages={253--284},
  year={2024},
  publisher={Elsevier}
}

@article{FP2024,
  title={Degenerate stability of the {C}affarelli--{K}ohn--{N}irenberg inequality along the {F}elli--{S}chneider curve},
  author={Frank, Rupert L and Peteranderl, Jonas W},
  JOURNAL = {Calc. Var. Partial Differential Equations},
  FJOURNAL = {Calculus of Variations and Partial Differential Equations},
  volume={63},
  number={2},
  pages={44},
  year={2024},
  publisher={Springer}
}

@article{ZZ2026,
  title={The stability on the {C}affarelli--{K}ohn--{N}irenberg and {H}ardy-type inequalities and beyond},
  author={Zhou, Yuxuan and Zou, Wenming},
  JOURNAL = {J. Differential Equations},
  FJOURNAL = {Journal of Differential Equations},
  volume={450},
  pages={113738},
  year={2026},
  publisher={Elsevier}
}

@article {MR3216835,
    AUTHOR = {Rajala, Tapio and Sturm, Karl-Theodor},
     TITLE = {Non-branching geodesics and optimal maps in strong {$CD(K,\infty)$}-spaces},
   JOURNAL = {Calc. Var. Partial Differential Equations},
  FJOURNAL = {Calculus of Variations and Partial Differential Equations},
    VOLUME = {50},
      YEAR = {2014},
    NUMBER = {3-4},
     PAGES = {831--846},
      ISSN = {0944-2669,1432-0835},
   MRCLASS = {53C23 (49Q20)},
  MRNUMBER = {3216835},
MRREVIEWER = {Nicolas\ Juillet},
       DOI = {10.1007/s00526-013-0657-x},
       URL = {https://doi.org/10.1007/s00526-013-0657-x},
}

@article {MR2341840,
    AUTHOR = {Ohta, Shin-ichi},
     TITLE = {On the measure contraction property of metric measure spaces},
   JOURNAL = {Comment. Math. Helv.},
  FJOURNAL = {Commentarii Mathematici Helvetici. A Journal of the Swiss Mathematical Society},
    VOLUME = {82},
      YEAR = {2007},
    NUMBER = {4},
     PAGES = {805--828},
      ISSN = {0010-2571,1420-8946},
   MRCLASS = {53C23 (28C15)},
  MRNUMBER = {2341840},
MRREVIEWER = {Jana\ Bj\"orn},
       DOI = {10.4171/CMH/110},
       URL = {https://doi.org/10.4171/CMH/110},
}

@article{MR2237207,
	AUTHOR = {K.-T. Sturm},
	TITLE = {On the geometry of metric measure spaces. {II}},
	JOURNAL = {Acta Math.},
	FJOURNAL = {Acta Mathematica},
	VOLUME = {196},
	YEAR = {2006},
	NUMBER = {1},
	PAGES = {133--177},
}

@article {MR3691502,
    AUTHOR = {Cavalletti, Fabio and Mondino, Andrea},
     TITLE = {Optimal maps in essentially non-branching spaces},
   JOURNAL = {Commun. Contemp. Math.},
  FJOURNAL = {Communications in Contemporary Mathematics},
    VOLUME = {19},
      YEAR = {2017},
    NUMBER = {6},
     PAGES = {1750007, 27},
      ISSN = {0219-1997,1793-6683},
   MRCLASS = {49Q20 (53C23)},
  MRNUMBER = {3691502},
MRREVIEWER = {Alexander\ O.\ Ivanov},
       DOI = {10.1142/S0219199717500079},
       URL = {https://doi.org/10.1142/S0219199717500079},
}

@article {MR4309491,
    AUTHOR = {Cavalletti, Fabio and Milman, Emanuel},
     TITLE = {The globalization theorem for the curvature-dimension
              condition},
   JOURNAL = {Invent. Math.},
  FJOURNAL = {Inventiones Mathematicae},
    VOLUME = {226},
      YEAR = {2021},
    NUMBER = {1},
     PAGES = {1--137},
      ISSN = {0020-9910,1432-1297},
   MRCLASS = {49Q22 (49Q20 53C23)},
  MRNUMBER = {4309491},
MRREVIEWER = {Luca\ Granieri},
       DOI = {10.1007/s00222-021-01040-6},
       URL = {https://doi.org/10.1007/s00222-021-01040-6},
}

@article {MR4175820,
    AUTHOR = {Cavalletti, Fabio and Mondino, Andrea},
     TITLE = {New formulas for the {L}aplacian of distance functions and
              applications},
   JOURNAL = {Anal. PDE},
  FJOURNAL = {Analysis \& PDE},
    VOLUME = {13},
      YEAR = {2020},
    NUMBER = {7},
     PAGES = {2091--2147},
      ISSN = {2157-5045,1948-206X},
   MRCLASS = {53C23 (49J52 49Q20 49Q22)},
  MRNUMBER = {4175820},
MRREVIEWER = {Emil\ Saucan},
       DOI = {10.2140/apde.2020.13.2091},
       URL = {https://doi.org/10.2140/apde.2020.13.2091},
}
\bibliographystyle{abbrv}


\end{document}